\documentclass[abstract,twocolumn,10pt,DIV=18]{scrartcl}

\usepackage{amsmath}
\usepackage{amssymb}
\usepackage{amsthm}
\usepackage{enumitem}
\usepackage[colorlinks = true, linkcolor = blue, citecolor = blue]{hyperref}
\usepackage[nameinlink]{cleveref}
\usepackage{graphicx}

\setcapindent{0pt}
\setkomafont{caption}{\itshape}
\setkomafont{captionlabel}{\bfseries}
\setkomafont{caption}{\footnotesize}

\newtheorem{assumption}{Assumption}
\newtheorem{theorem}{Theorem}
\newtheorem{proposition}{Proposition}
\newtheorem{claim}{Claim}
\newtheorem{lemma}{Lemma}
\newtheorem{remark}{Remark}

\crefname{equation}{}{}
\crefname{claim}{Claim}{Claims}

\title{Non-Local Extremum Seeking \\ Based on the Divergence Theorem\thanks{This research was supported by the German Research Foundation DFG, project numbers DA 767/13-1 and EB 425/8-1. Corresponding author: Raik Suttner. \\ $\hphantom{m}$\textit{Email addresses:} \texttt{$\{$raik.suttner, christian.ebenbauer$\}$} \\ \texttt{@ic.rwth-aachen.de} (Raik Suttner, Christian Ebenbauer) \\\texttt{sergey.dashkovskiy@uni-wuerzburg.de}(Sergey Dashkovskiy)}}
\author{Raik Suttner, $\ $ Christian Ebenbauer,${\!}\vphantom{m}^{\text{a}}$ $\ $ Sergey Dashkovskiy$\vphantom{m}^{\text{b}}$}
\date{\normalsize{$\vphantom{m}^{\text{a}}$ Chair of Intelligent Control Systems, RWTH Aachen University, Aachen, Germany} \\ \normalsize{$\vphantom{m}^{\text{b}}$ Institute of Mathematics, University of W\"urzburg, W\"urzburg, Germany}}

\begin{document}
\maketitle
\begin{abstract}
We propose a new design strategy for extremum seeking control for a multi-dimensional single-integrator system in the presence of local extrema. The proposed method employs suitably designed sinusoidal dither signals, which force the single-integrator to a spherical motion. Over time, this spherical motion gives approximate access to an integral of the objective function over a sphere. Using the divergence theorem, we identify the integral over the sphere as the gradient of an integral over the enclosed ball. This integral over the ball defines a locally averaged objective function. The proposed extremum seeking method drives the system state into the gradient direction of the averaged objective function. Such a local average of the objective function can eliminate undesired local extrema and is therefore beneficial for global optimization. Under the assumption that the averaged objective function has no undesired critical points, we prove practical asymptotic stability of the closed-loop system. Our theoretical analysis takes sufficiently small $L_\infty$-measurement errors of the objective function into account.
\end{abstract}

\section{Introduction}
The design of extremum seeking control is an intensively studied problem, which has led to a variety of powerful methods with many practical applications \cite{ZhangBook}, \cite{LiuBook}, \cite{ScheinkerBook}, \cite{OliveiraBook}, \cite{Scheinker2024}. Most of the established extremum seeking methods employ dither signals (or excitations signals), which probe the response of a state-dependent objective function and steer a control system approximately into the direction of the gradient \cite{Krstic2000}, \cite{Tan2006}. Such a gradient-based approach usually leads to convergence to the nearest local extremum. In many practical applications, it is, however, desirable to accomplish not just local but global optimization \cite{Tan2009}. A typical example is the problem of source seeking \cite{Duerr20112}, \cite{Matveev2014}, \cite{Abdelgalil20221}. Here, an autonomous agent tries to locate a point where an unknown position-dependent field attains its maximum strength. For instance, the autonomous agent could be a drone, which is equipped with an on-board sensor to measure the strength of an electromagnetic field. The field may be influenced by different objects and sources, which cause spatial inhomogeneities. In this case, it is likely that the field has a point of maximum strength, but also several other local extrema. It is therefore important to investigate methods, which have the ability to overcome local extrema in order to reach a global extremum.

It is well-known that the choice of dither in extremum seeking has a significant impact on the performance of the closed-loop system \cite{Tan2008}. This is especially true for global extremum seeking in the presence of local extrema, which is also often observed in numerical simulations \cite{Wang2016}, \cite{Bhattacharjee2021}. A frequently studied problem in this context is global optimization of a one-dimensional steady-state output function, which describes the steady-state output value of a stable system as a function of a one-dimensional tuning parameter as in \cite{Tan2009}. It is observed that a sufficiently large amplitude of the employed sinusoidal dither signals can help to overcome local extrema of the steady-state output function. The approach for the one-dimensional problem in \cite{Tan2009} was further developed in several other studies, such as \cite{Ye2020} and
\cite{Mimmo20241}. To deal with multi-dimensional objective functions, some studies propose extremum seeking based on known numerical algorithms for optimization over a user-prescribed compact set, like the DIRECT optimization method in \cite{Khong20132}. However, such an approach is difficult to realize when the system state (or the argument of the objective function) is unknown.

In the present paper, we consider a multi-dimensional single-integrator system with unknown system state. The output is given by an analytically unknown state-dependent objective function, which may have local extrema. Similar to many of the existing multi-variable extremum seeking methods, our proposed control scheme employs sinusoidal dither signals with suitably selected frequencies. However, in contrast to the existing methods, our approach does not steer the system into the gradient direction of the measured objective (or output) function. Instead, the state of the single-integrator is driven into the gradient direction of an \emph{averaged objective function}. The averaged objective function is given by the local average of the measured output function over a ball centered at the current system state with radius given by the amplitude of the sinusoids. Such a local average tends to have fewer local extrema than the original objective function, since local extrema are ``washed out.'' The concept of an averaged objective function was also conjectured for other perturbation-based methods, like in \cite{Wildhagen2018}, but without mathematically precise explanation. The novel design of the dither in this paper allows us to identify the averaged objective function explicitly through a simple formula.

The key ingredient in our theoretical analysis is the \emph{divergence theorem} (also known as \emph{Gauss's theorem} or \emph{Ostrogradsky's theorem}). The employed dither signals are designed in such a way that the single-integrator integrates the measured output on a sphere. Then, the divergence theorem allows us to identify the integral over the sphere as the gradient of an integral over the enclosed ball. This leads to the above-mentioned averaged objective function. The divergence theorem has also been used in \cite{Suttner20241}, \cite{Suttner20243} to design source seeking methods for a unicycle in two-dimensional Euclidean space. In this paper, we propose a new approach based on the divergence theorem, which can be applied to a single-integrator in dimension $\geq2$. The idea of using the divergence theorem is also known from numerical optimization \cite{Gupal1977}, \cite{Mayne1984}, \cite{Flaxman2005}. However, these methods assume that the state and the objective function are known in order to compute the integral over the sphere. Our method only requires real-time measurements of the output.

The remaining paper is structured as follows. In \Cref{sec:2}, we provide a mathematically precise problem statement and explain the role of the divergence theorem and the averaged objective function for the proposed extremum seeking method. The control scheme is presented in \Cref{sec:3}, where we also indicate how the analytically unknown objective function is integrated over a sphere. Our main theoretical result can be found in \Cref{sec:4}, which guarantees practical asymptotic stability for the closed-loop system under the assumption that the averaged objective has no undesired critical points. The existence of undesired critical points of the averaged objective function depends on the radius of averaging, which is illustrated by numerical examples in \Cref{sec:5}.

\section{Problem statement and underlying idea}\label{sec:2}
Let $n$ be an integer $\geq2$. Consider the single-integrator model%
\begin{equation}\label{eq:01}
\dot{x} \ = \ u
\end{equation}%
in $\mathbb{R}^n$ with state $x$ and input $u$. The state of \cref{eq:01} is assumed to be an \emph{unknown} quantity, which cannot be measured. The only information about the current state of \cref{eq:01} is provided by a real-valued output $y=J(x)$, which is given by a smooth output function $J\colon\mathbb{R}^n\to\mathbb{R}$. The function $J$ is \emph{analytically unknown}, meaning that the functional dependence between the argument $x\in\mathbb{R}^n$ and the value $J(x)\in\mathbb{R}$ is not known. Only real-time measurements of the output are available and the measurement results cannot be stored. In addition, the measurements may be noise-corrupted so that the actually measured output value is given by%
\begin{equation}\label{eq:02}
\hat{y} \ = \ J(x) + d(t),
\end{equation}%
where $d\in{L_\infty}$ represents a measurement disturbance. Here, $L_\infty$ denotes the space of all Lebesgue measurable and essentially bounded real-valued functions on $\mathbb{R}$. Let $\|\cdot\|$ denote the usual essential supremum norm on $L_\infty$.

\begin{figure}%
\centering\includegraphics{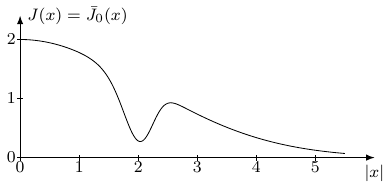}%
\caption{Plot of the radially symmetric objective function $J$ in \cref{eq:03}. This function attains its global maximum value at the origin. In addition, local extrema occur in spherical sets around the origin.}%
\label{fig:01}%
\end{figure}%
The output function $J$ also plays the role of an objective function. In this paper, we focus on \emph{maximizing} the value of $J$ (which is equivalent to minimizing $-J$). We are interested in a control law $u$ for \cref{eq:01}, which steers the state $x$ of \cref{eq:01} to a maximizer of $J$ and whose implementation only requires real-time measurements of \cref{eq:02}. Such a control law is called \emph{extremum seeking control}. Many of the known extremum seeking methods steer the system state approximately into the gradient direction of $J$. This leads to convergence to the nearest local maximizer of $J$. However, a local maximum is not necessarily a global maximum. For instance, consider the case in which $J\colon\mathbb{R}^n\to\mathbb{R}$ takes the form%
\begin{equation}\label{eq:03}
J(x) \ = \ 2\,\exp\big(-(|x|/3)^2\big) - \exp\big(-(|x|^2-2^2)/2\big),
\end{equation}%
where $|\cdot|$ denotes the Euclidean norm. A plot of this radially symmetric function can be found in \Cref{fig:01}. One can see that the unique global maximizer of $J$ is at the origin. But there are also spherical sets of local minimizers and maximizers. These spherical sets represent an insurmountable barrier for the gradient flow of $J$. The intention of the present paper is to propose an approach to global optimization in the presence of local extrema, which is not based on the gradient of $J$.

The main idea in this paper is to ``wash out'' undesired local extrema of $J$ in order to reach a global maximum. The idea of ``washing out'' critical points is realized by a local average of $J$. To this end, let $\mathbb{B}$ denote the closed ball in $\mathbb{R}^n$ of radius $1$ centered at the origin. Let $a$ be a positive real number, which will play the role of a radius. Define an \emph{averaged objective function} $\bar{J}_a\colon\mathbb{R}^n\to\mathbb{R}$ by%
\begin{equation}\label{eq:04}
\bar{J}_a(x) \ := \ \frac{1}{\mathrm{vol}(\mathbb{B})}\int_{\mathbb{B}}J(x+a\,\xi)\,\mathrm{d}\xi,
\end{equation}%
where $\mathrm{vol}(\mathbb{B})$ denotes the volume of $\mathbb{B}$. Then, for every $x\in\mathbb{R}^n$, the value of $\bar{J}_a$ at $x$ is the local average of $J$ over the ball of radius $a$ centered at $x$. The function $\bar{J}_a$ may be considered as a smoothed version of $J$. In the limit $a\to0$, the averaged objective function converges locally uniformly to the original objective function. Considering a local average of $J$ can help to eliminate local extrema. For the example of $J$ in \Cref{fig:01}, the respective averaged objective function $\bar{J}_a$ is shown in the upper row of \Cref{fig:07} for $a=1/2$ and $a=1$. One can see that $\bar{J}_{1/2}$ still has nearly the same spherical sets of local extrema like $J$, but the ``depth'' and ``height'' of the local extrema is reduced. If the radius $a$ of averaging is increased to $a=1$, then the local extrema of $J$ are completely ``washed out,'' meaning that $\bar{J}_a$ only has the unique global maximizer at the origin and no other critical point.

Because of the above observations, it is desirable to get access to the gradient $\nabla\bar{J}_a\colon\mathbb{R}^n\to\mathbb{R}^n$ of the averaged objective function $\bar{J}_a$. Computing $\bar{J}_a$ directly from the defining integral in \cref{eq:04} is rather difficult, since the current state $x\in\mathbb{R}^n$ is unknown and also the function $J$ is analytically unknown. However, it is possible to represent $\nabla\bar{J}_a$ as an integral, which can be approximated by an extremum seeking method with suitably designed dither signals. In the remaining paragraphs of this section, we will derive a convenient integral representation of $\nabla\bar{J}_a$. An approximation of this integral by extremum seeking control is proposed in the subsequent \Cref{sec:3}.

The key ingredient for rewriting the gradient of the averaged objective function $\bar{J}_a$ is the \emph{divergence theorem} (see, e.g., \cite{AmannAnalysisIII}). It allows us to turn the defining integral of $\bar{J}_a$ over the unit ball $\mathbb{B}$ into an integral over the unit sphere $\partial\mathbb{B}$. A componentwise application of the divergence theorem reveals that the gradient vector field $\nabla\bar{J}_a\colon\mathbb{R}^n\to\mathbb{R}^n$ is given by%
\begin{equation}\label{eq:05}
\nabla\bar{J}_a(x) \ = \ \frac{1}{a\,\mathrm{vol}(\mathbb{B})}\int_{\partial\mathbb{B}}J(x+a\,s)\,s\,\mathrm{d}\lambda_{\partial\mathbb{B}}(s),
\end{equation}%
where $\lambda_{\partial\mathbb{B}}$ denotes the usual Lebesgue measure on $\partial\mathbb{B}$. Equation \cref{eq:05} allows us, at least in theory, to compute the gradient of $\bar{J}_a$ from an integral, which only involves the values of $J$. Moreover, the values of $J$ are assumed to be accessible through measurements of the output \cref{eq:02}.

\begin{figure}%
\centering$\begin{matrix} \begin{matrix}\includegraphics{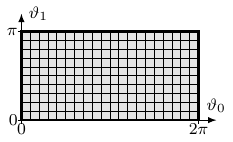}\end{matrix}\!\! & \!\!\begin{matrix}\includegraphics{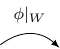}\end{matrix}\!\!\quad & \!\!\begin{matrix}\includegraphics{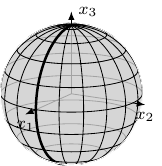}\end{matrix} \end{matrix}$%
\caption{Illustration of spherical coordinates in dimension $n=3$. The rectangle on the left-hand side is the set $W$ in \cref{eq:07}. The restriction of the map $\phi$ in \cref{eq:06} to the set $W$ parametrizes the entire sphere up to a set of measure zero.}%
\label{fig:02}%
\end{figure}%
In the next step, we represent the integral in \cref{eq:05} over the manifold $\partial\mathbb{B}$ in spherical coordinates. To this end define a global parametrization $\phi\colon\mathbb{R}^{n-1}\to\partial\mathbb{B}$ of $\partial\mathbb{B}$ by%
\begin{align}\label{eq:06}
& \phi(\vartheta_0,\vartheta_1,\ldots,\vartheta_{n-2}) \ := \ \\
& \left[\text{\footnotesize\begin{tabular}{r} 
$\cos(\vartheta_0)\,\sin(\vartheta_1)\,\sin(\vartheta_2)\cdots\sin(\vartheta_{n-3})\,\sin(\vartheta_{n-2})$ \\
$\sin(\vartheta_0)\,\sin(\vartheta_1)\,\sin(\vartheta_2)\cdots\sin(\vartheta_{n-3})\,\sin(\vartheta_{n-2})$ \\
$\cos(\vartheta_1)\,\sin(\vartheta_2)\cdots\sin(\vartheta_{n-3})\,\sin(\vartheta_{n-2})$ \\
$\vdots\qquad$ \\
$\cos(\vartheta_{n-3})\,\sin(\vartheta_{n-2})$ \\
$\cos(\vartheta_{n-2})$ \end{tabular}}\right]. \nonumber
\end{align}%
It is known (see, e.g., \cite{AmannAnalysisIII}) that the restriction of $\phi$ to the open set%
\begin{equation}\label{eq:07}
W \ := \ (0,2\pi)\times(0,\pi)^{n-2} \ \subset \ \mathbb{R}^{n-1}
\end{equation}%
is a diffeomorphism onto an open submanifold of $\partial\mathbb{B}$ of full measure, which is indicated in \Cref{fig:02}. It is also known (see, e.g., \cite{AmannAnalysisIII}) that the surface integral in \cref{eq:05} can be expressed in the coordinates of $\phi$. To this end, let $g\colon\mathbb{R}^{n-1}\to\mathbb{R}$ denote the square root of the Gram determinant of $\phi$; that is, $g$ is defined by%
\begin{equation}\label{eq:08}
g(\vartheta) \ := \ \sqrt{\det\big(\mathrm{D}\phi(\vartheta)^\top\mathrm{D}\phi(\vartheta)\big)},
\end{equation}%
where $\mathrm{D}\phi(\vartheta)\in\mathbb{R}^{n\times(n-1)}$ is the Jacobian matrix of $\phi$ at $\vartheta\in\mathbb{R}^{n-1}$. It is known (see, e.g., \cite{AmannAnalysisIII}) that $g$ is given by%
\begin{align}
& g(\vartheta_0,\vartheta_1,\ldots,\vartheta_{n-2}) \ = \ \label{eq:09} \\
& \qquad\qquad \big|\sin(\vartheta_1)\,\sin^2(\vartheta_2)\,\cdots\,\sin^{n-2}(\vartheta_{n-2})\big|. \nonumber
\end{align}%
Using the Gram determinant of $\phi$, one can write the integral in \cref{eq:05} in the form%
\begin{equation}\label{eq:10}
\int_{\partial\mathbb{B}}\!\!J(x+a\,s)\,s\,\mathrm{d}\lambda_{\partial\mathbb{B}}(s) = \int_W\!\!J(x+a\,\phi(\vartheta))\,g(\vartheta)\,\phi(\vartheta)\,\mathrm{d}\vartheta.
\end{equation}%
Hence, if we know the integral on the right-hand side of \cref{eq:10}, then we know the gradient \cref{eq:05} of the averaged objective function $\bar{J}_a$ in \cref{eq:04}. The proposed control scheme in the next section has the intention to approximate the integral on the right-hand side of \cref{eq:10}. We will approximate this multi-dimensional integral by means of a one-dimensional integral along a nearly space-filling curve.%

\section{Control law and closed-loop system}\label{sec:3}
As explained in the previous section, we are interested in getting access to the integral on the right-hand side of \cref{eq:10}. To this end, we now construct suitable periodic dither signals for extremum seeking control. Let $\mathbb{N}$ denote the set of positive integers and let $\mathbb{R}_+$ denote the set of positive real numbers. To parametrize the angle coordinates of the map $\phi\colon\mathbb{R}^{n-1}\to\partial\mathbb{B}$ in \cref{eq:06} as a function of time, for every $k\in\mathbb{N}$, define a map $\theta_k\colon\mathbb{R}\to\mathbb{R}^{n-1}$ by%
\begin{equation}\label{eq:11}
\theta_k(\tau) \ := \ \big[\tau, \ 2^{1\cdot{k}-1}\,\tau, \ \ldots, \ 2^{(n-2)\cdot{k}-1}\,\tau\big]^\top.
\end{equation}%
\begin{figure}%
\centering$\begin{matrix}
\text{\color{blue}\footnotesize$U_3$}\ \begin{matrix}\includegraphics{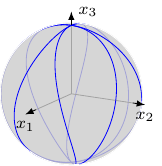}\end{matrix} \quad & \quad \text{\color{blue}\footnotesize$U_4$}\ \begin{matrix}\includegraphics{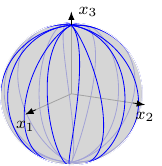}\end{matrix}\end{matrix}$%
\caption{Plots of the curve $U_k$ in \cref{eq:12} for $k=3,4$ and $n=3$.}%
\label{fig:03}%
\end{figure}%
Next, for every $k\in\mathbb{N}$, define a smooth $2\pi$-periodic map $U_k\colon\mathbb{R}\to\partial\mathbb{B}$ by%
\begin{equation}\label{eq:12}
U_k(\tau) \ := \ \phi(\theta_k(\tau)).
\end{equation}%
The map $U_k$ is illustrated for $k=3,4$ and $n=3$ in \Cref{fig:03}. One can observe that the trajectory of $U_k$ tends to ``fill'' the entire sphere $\partial\mathbb{B}$ with increasing $k\in\mathbb{N}$. This feature will be essential in order to prove that the multi-dimensional integral in \cref{eq:10} can be approximated by a one-dimensional integral along a suitable curve, which is given by the $\partial\mathbb{B}$-valued map $U_k$ with $k\in\mathbb{N}$ sufficiently large. The proposed extremum seeking method involves two periodic dither signals: For every $k\in\mathbb{N}$, define $2\pi$-periodic maps $u_k,v_k\colon\mathbb{R}\to\mathbb{R}^n$ by%
\begin{subequations}
\begin{align}
u_k(\tau) & \ := \ \dot{U}_k(\tau) \ = \ \tfrac{\mathrm{d}U_k}{\mathrm{d}\tau}(\tau) ,\\
v_k(\tau) & \ := \ g(\theta_k(\tau))\,U_k(\tau),
\end{align}%
\end{subequations}%
where $g$ is given by \cref{eq:09}.

Let $a$, $b$, and $h$ be positive real constants, where the constant $a$ will play the role of the radius in the definition \cref{eq:04} of the averaged objective function $\bar{J}_a$. For every parameter $\omega\in\mathbb{R}_+$ and every parameter $k\in\mathbb{N}$, consider the time-varying output feedback law%
\begin{equation}\label{eq:14}
u \ = \ a\,\omega\,u_k(\omega{t}) + (\hat{y}-\eta)\,b\,v_k(\omega{t})
\end{equation}%
for the single integrator \cref{eq:01}, where $\hat{y}$ is the noise-corrupted output \cref{eq:02} and $\eta$ is the state of a high-pass filter with state space equation%
\begin{equation}\label{eq:15}
\dot{\eta} \ = \ -h\,\eta + h\,\hat{y}
\end{equation}%
and filter output $\hat{y}-\eta$. The intention of the high-pass filter is to remove a possible offset from the measured signal $\hat{y}$. The high-pass filter can be omitted, but it may help to improve the performance of the method. A sketch of the proposed scheme can be found in \Cref{fig:04}.%
\begin{remark}\label{remark:1}
In dimension $n=2$, the proposed control law is the same as the extremum seeking method in \cite{Zhang20072} for moderately unstable systems and for autonomous vehicle target tracking without position measurements. However, the analysis in \cite{Zhang20072} is limited to purely quadratic objective functions and does not address extremum seeking in the presence of local extrema. Also the novel interpretation in terms of an averaged objective function cannot be found in \cite{Zhang20072}. One may view the proposed method as a variant of ``classic extremum seeking'' as in \cite{Krstic2000} and \cite{Tan2006}, which typically has the intention to provide access to the gradient of the measured objective function $J$. Our novel design of the dither signals $u_k$ and $v_k$ allows us to get access to the gradient of the averaged objective function $\bar{J}_a$.
\end{remark}%
\begin{figure}%
\centering$\includegraphics{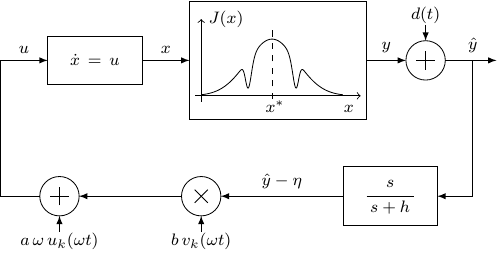}$%
\caption{Sketch of the proposed control scheme. The method also works without the high-pass filter.}%
\label{fig:04}%
\end{figure}%
\begin{figure*}%
\centering\includegraphics{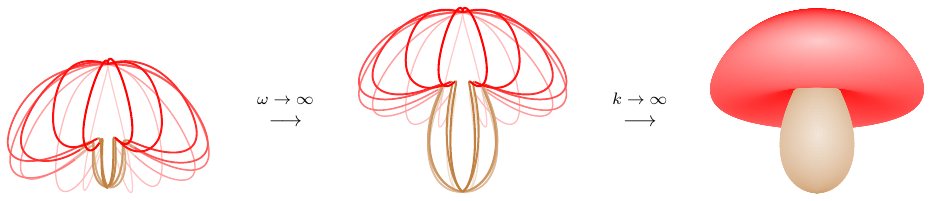}%
\caption{The three plots are generated for the objective function $J$ in \cref{eq:03} and dimension $n=3$. The transformed closed-loop system \cref{eq:18} is considered for $a=b=1$, $\omega=\frac{1}{20}$, $k=4$, and initial condition $\tilde{x}(0)=[0,0,2.62]^\top$, $\eta(0)=0$. The left-most plot shows \cref{eq:19} as a function of $t\in[0,\frac{2\pi}{\omega}]$. The center plot shows \cref{eq:20} for $t=0$ as a function of $\tau\in[0,2\pi]$. The right-most plot shows \cref{eq:21} for $t=0$ as a function of $s\in\partial\mathbb{B}$. The different colors only have the intention to improve the visualization of the three-dimensional objects.}%
\label{fig:05}%
\end{figure*}%
For the moment, fix arbitrary control parameters $\omega\in\mathbb{R}_+$ and $k\in\mathbb{N}$. We apply the proposed control law~\cref{eq:14}--\cref{eq:15} to the input-output system \cref{eq:01}--\cref{eq:02} and obtain the closed-loop system%
\begin{subequations}\label{eq:16}%
\begin{align}
\dot{x}(t) & \ = \ a\,\omega\,u_k(\omega\,t) + \big(J(x(t))+d(t)-\eta(t)\big)\,b\,v_k(\omega{t}), \label{eq:16:a} \\
\dot{\eta}(t) & \ = \ - h\,\eta(t) + h\,\big(J(x(t))+d(t)\big) + h\,d(t) \label{eq:16:b}
\end{align}%
\end{subequations}%
in $\mathbb{R}^n\times\mathbb{R}$. At this point, it is not obvious how the dynamics of \cref{eq:16} are related to the gradient \cref{eq:05} of the averaged objective function \cref{eq:04}. We will address this question in the next section. As a first step towards an explanation, we perform the change of variables%
\begin{equation}\label{eq:17}
\tilde{x}(t) \ \ = \ x(t) - a\,U_k(\omega{t})
\end{equation}%
for state of \cref{eq:16:a}. Then, the closed-loop system \cref{eq:16} can be written equivalently as%
\begin{subequations}\label{eq:18}%
\begin{align}
\!\!\!\dot{\tilde{x}}(t) & \ = \ J\big(\tilde{x}(t) + a\,U_k(\omega{t})\big)\,b\,v_k(\omega{t}) \label{eq:18:a} \\
& \qquad - \eta(t)\,b\,v_k(\omega{t}) + d(t)\,b\,v_k(\omega{t}), \label{eq:18:b} \\
\!\!\!\dot{\eta}(t) & \ = \ - h\,\eta(t) + h\,J\big(\tilde{x}(t) + a\,U_k(\omega{t})\big) + h\,d(t).\!\!\! \label{eq:18:c}
\end{align}%
\end{subequations}%
Notice that, up to the scalar factor $b$, the expression on the right-hand side of \cref{eq:18:a} is the same as the integrand on the right-hand side of \cref{eq:10} for $x=\tilde{x}(t)$ and $\vartheta=\theta_k(\omega{t})$. If we omit the factor $b>0$ and the weight term $g(\theta_k(\omega{t}))$ from the Gram determinant for the integral transformation, then the right-hand side of \cref{eq:18:a} reads%
\begin{equation}\label{eq:19}
J\big(\tilde{x}(t) + a\,\phi(\theta_k(\omega{t}))\big)\,\phi(\theta_k(\omega{t})).
\end{equation}%
The single-integrator \cref{eq:18:a}--\cref{eq:18:b} is integrating \cref{eq:19} with respect to time. A sketch of \cref{eq:19} as a function of time $t$ is shown in the left plot of \Cref{fig:05}. If the control parameter $\omega\in\mathbb{R}_+$ is sufficiently large, then the state $\tilde{x}(t)$ appears almost static compared to the fast variations of dither signal. Therefore, the integral of \cref{eq:19} over one dither signal period is approximately the same as the integral of%
\begin{equation}\label{eq:20}
J\big(\tilde{x}(t) + a\,\phi(\theta_k(\tau))\big)\,\phi(\theta_k(\tau))
\end{equation}%
with respect to $\tau\in[\omega{t},\omega{t}+2\pi]$. A sketch of \cref{eq:20} as a function of $\tau$ is shown in the center plot of \Cref{fig:05}. If also the control parameter $k\in\mathbb{N}$ is sufficiently large, then the curve $\phi\circ\theta_k=U_k$ ``fills'' nearly all of the sphere $\partial\mathbb{B}$ over a period of $2\pi$, and therefore the set of \cref{eq:20} with $\tau\in[\omega{t},\omega{t}+2\pi]$ becomes ``dense'' in the set of%
\begin{equation}\label{eq:21}
J\big(\tilde{x}(t) + a\,s\big)\,s
\end{equation}%
with $s\in\partial\mathbb{B}$, which is the integrand on the left-hand side of \cref{eq:10}. A sketch of \cref{eq:21} as a function of $s$ is shown in the right-most plot of \Cref{fig:05}. The Gram determinant $g\circ\theta_k$ in the definition of dither signal $v_k$ is needed to allow for an application of \cref{eq:10} in the theoretical analysis. The above (admittedly vague) chain of reasoning indicates that the one-dimensional integral of the right-hand side of \cref{eq:18:a} with respect to time $t$ is related to the desired multi-dimensional integral on the left-hand side of \cref{eq:10}. In the next section, we will describe the behavior of \cref{eq:18} for sufficiently large control parameters $\omega\in\mathbb{R}_+$ and $k\in\mathbb{N}$ in more detail and present the main stability result of the paper.

\section{Main result}\label{sec:4}
The theoretical analysis of the transformed closed-loop system \cref{eq:18} in the limit $\omega\to\infty$ is based on a standard averaging procedure. For every $k\in\mathbb{N}$, the time average of the right-hand side of \cref{eq:18:a} in the limit $\omega\to\infty$ can be described by a vector field $\breve{F}_k$ on $\mathbb{R}^n$ defined by%
\begin{equation}\label{eq:22}
\breve{F}_k(\tilde{x}) := \frac{b}{2\pi}\int_0^{2\pi}\!\!J\big(\tilde{x} + a\,\phi(\theta_k(\tau))\big)\,g(\theta_k(\tau))\,\phi(\theta_k(\tau))\,\mathrm{d}\tau.
\end{equation}%
Correspondingly, for every $k\in\mathbb{N}$, the time average of the first term in \cref{eq:18:b} in the limit $\omega\to\infty$ can be described by a map $\breve{E}_k\colon\mathbb{R}\to\mathbb{R}^n$ defined by%
\begin{equation}\label{eq:23}
\breve{E}_k(\eta) \ := \ -\frac{b}{2\pi}\,\eta\int_0^{2\pi}g(\theta_k(\tau))\,\phi(\theta_k(\tau))\,\mathrm{d}\tau.
\end{equation}%
It is shown in \Cref{lemma:1} in the \hyperlink{appendix}{Appendix} that, for every sufficiently large $\omega\in\mathbb{R}_+$ and every $k\in\mathbb{N}$, the solutions of subsystem \cref{eq:18:a}--\cref{eq:18:b} approximate the solutions of%
\begin{equation}\label{eq:24}
\dot{\breve{x}}(t) \ = \ \breve{F}_k(\breve{x}(t)) + \breve{E}_k(\eta(t)) + d(t)\,b\,v_k(\omega{t}).
\end{equation}%
As indicated in \Cref{sec:3} and in \Cref{fig:03}, the curve $U_k=\phi\circ\theta_k$ tends to ``fill'' the sphere $\partial\mathbb{B}$ in the limit $k\to\infty$. More precisely, one can show (see \Cref{lemma:2} in the \hyperlink{appendix}{Appendix}) that%
\begin{align}
\lim_{k\to\infty}\breve{F}_k(\breve{x}) & \ = \ \frac{b}{2\pi^{n-1}}\int_{\partial\mathbb{B}}J(\breve{x}+a\,s)\,s\,\mathrm{d}\lambda_{\partial\mathbb{B}}(s), \label{eq:25} \\
\lim_{k\to\infty}\breve{E}_k(\eta) & \ = \ - \frac{b}{2\pi^{n-1}}\,\eta\int_{\partial\mathbb{B}}s\,\mathrm{d}\lambda_{\partial\mathbb{B}}(s) \ = \ 0 \label{eq:26}
\end{align}%
for every $\breve{x}\in\mathbb{R}^n$ and every $\eta\in\mathbb{R}$. Because of \cref{eq:26}, the dynamics of \cref{eq:24} become independent of the high-pass filter state $\eta$ in the limit $k\to\infty$. Because of the relation \cref{eq:05} from the divergence theorem, the limit in \cref{eq:25} provides the desired gradient%
\begin{equation}\label{eq:27}
\lim_{k\to\infty}\breve{F}_k(\breve{x}) \ = \ a\,b\,c\,\nabla\bar{J}_a(\breve{x})
\end{equation}%
of the averaged objective function $\bar{J}_a$ in \cref{eq:04}, where%
\begin{equation}\label{eq:28}
c \ := \ \mathrm{vol}(\mathbb{B})/(2\pi^{n-1}).
\end{equation}%
The above statements indicate that, for sufficiently large $\omega\in\mathbb{R}_+$ and sufficiently large $k\in\mathbb{N}$, the solutions of subsystem \cref{eq:18:a}--\cref{eq:18:b} approximate the solutions of%
\begin{equation}\label{eq:29}
\dot{\bar{x}}(t) \ = \ a\,b\,c\,\nabla\bar{J}_a(\bar{x}(t)) + b\,d(t)\,v_k(\omega{t}).
\end{equation}%
Indeed, we can prove the following approximation result.%
\begin{proposition}\label{proposition:1}
Let $\bar{K}$ and $\tilde{K}$ be compact subsets of $\mathbb{R}^n$ such that $\bar{K}$ is contained in the interior of $\tilde{K}$. Let $\delta,\rho>0$ such that $\big|J(\tilde{x})+ a\,s\big| + \delta\leq\rho$ for every $\tilde{x}\in\tilde{K}$ and every $s\in\partial\mathbb{B}$. Then, for every arbitrary large $T>0$ and every arbitrary small $\varepsilon>0$, there exist sufficiently large $\omega_0\in\mathbb{R}_+$ and $k_0\in\mathbb{N}$ such that, for every $\omega\geq\omega_0$, every $k\geq{k_0}$, every $d\in{L_\infty}$ with $\|d\|\leq\delta$, every $t_0\in\mathbb{R}$, every $\tilde{x}_0\in\bar{K}$, and every $\eta_0\in\mathbb{R}$ with $|\eta_0|\leq\rho$, the following implication holds: If the maximal solution $\bar{x}$ of \cref{eq:29} with initial condition $\bar{x}(t_0)=\tilde{x}_0$ exists on $[t_0,t_0+T]$ and satisfies $\bar{x}(t)\in\bar{K}$ for every $t\in[t_0,t_0+T]$, then also the maximal solution $(\tilde{x},\eta)$ of \cref{eq:18} with initial condition $\tilde{x}(t_0)=\tilde{x}_0$, $\eta(t_0)=\eta_0$ exists on $[t_0,t_0+T]$ and satisfies%
\begin{equation}\label{eq:30}
|\tilde{x}(t) - \bar{x}(t)| \ \leq \ \varepsilon
\end{equation}%
and $|\eta(t)|\leq\rho$ for every $t\in[t_0,t_0+T]$.
\end{proposition}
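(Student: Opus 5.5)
Throughout, the condition in the statement is read as $|J(\tilde{x}+a\,s)|+\delta\leq\rho$ (the parentheses in the statement appear to be misplaced). The plan is to go through the intermediate system \cref{eq:24} in two stages: first replace \cref{eq:18:a}--\cref{eq:18:b} by \cref{eq:24} using averaging in $\omega$ (\Cref{lemma:1}), and then replace \cref{eq:24} by \cref{eq:29} using the limits \cref{eq:25}--\cref{eq:27} in $k$ (\Cref{lemma:2}). A continuation argument on $[t_0,t_0+T]$ ties the two stages together.

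\textbf{Step 1: a priori bound on the filter state.} For $\eta$ we argue by invariance. As long as $\tilde{x}(t)\in\tilde{K}$, the input of \cref{eq:18:c} satisfies $|J(\tilde{x}+aU_k)+d|\leq\rho$. Hence $\dot\eta=-h\eta+h(\cdot)$ cannot leave $[-\rho,\rho]$ when it starts there, so $|\eta(t)|\leq\rho$ for as long as $\tilde{x}$ stays in $\tilde{K}$. This also gives a uniform bound on the right-hand side of \cref{eq:18:a}--\cref{eq:18:b}, namely $2\rho b\sup|g|$ times a constant. In particular the solution exists as long as it remains in $\tilde{K}$.

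\textbf{Step 2: closeness of \cref{eq:18:a}--\cref{eq:18:b} and \cref{eq:29}.} Set $e(t)=\tilde{x}(t)-\bar{x}(t)$. The disturbance terms $d\,b\,v_k$ are identical in both systems and cancel exactly, which is why no smallness of $\|d\|$ beyond $\delta$ is needed. We then write
\[
\dot e=\big[F(\tilde{x},\omega t)-\breve F_k(\tilde{x})\big]+\big[E(\eta,\omega t)-\breve E_k(\eta)\big]+\big[\breve F_k(\tilde{x})-abc\nabla\bar J_a(\tilde{x})\big]+\breve E_k(\eta)+abc\big[\nabla\bar J_a(\tilde{x})-\nabla\bar J_a(\bar{x})\big],
\]
where $F$ and $E$ denote the first terms of \cref{eq:18:a} and \cref{eq:18:b}. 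The last bracket is bounded by $L|e|$, with $L$ the Lipschitz constant of $\nabla\bar J_a$ on $\tilde{K}$, and Gronwall's inequality handles it.

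\emph{The $k$-terms.} The third and fourth brackets tend to zero uniformly on $\tilde{K}\times[-\rho,\rho]$ as $k\to\infty$. Pointwise convergence is given by \Cref{lemma:2}. To upgrade it to uniform convergence, note that $\breve F_k$ is equi-Lipschitz in $\tilde{x}$ with constant $b\sup|g|\,\mathrm{Lip}(J)$, independent of $k$, and pointwise convergence of an equi-Lipschitz family on a compact set is uniform. The $\eta$-term is linear in $\eta$ with a coefficient tending to $0$. Choose $k_0$ so that these brackets are smaller than a prescribed $\varepsilon_1$.

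\emph{The oscillatory terms.} The first two brackets are $2\pi$-periodic in $\tau=\omega t$ with zero mean. We integrate them by parts using the bounded antiderivative $\Psi_k(\tilde{x},\eta,\tau)=\int_0^\tau(\ldots)$, which is periodic because the mean is zero. This gives
\[
\int_{t_0}^t\ldots=\frac{1}{\omega}\Big[\Psi_k\Big]-\frac{1}{\omega}\int\big(\partial_{\tilde{x}}\Psi_k\,\dot{\tilde{x}}+\partial_\eta\Psi_k\,\dot\eta\big),
\]
so the contribution is $O(T/\omega)$, using the bounds on $\dot{\tilde{x}}$ and $\dot\eta$ from Step 1. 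This is essentially \Cref{lemma:1}.

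\textbf{Main obstacle: uniformity in $k$.} The crucial point, and the step I expect to be hardest, is that $\omega_0$ must work for every $k\geq k_0$. The dither frequencies grow like $2^{(n-2)k}$, so $\partial_\tau$-type quantities blow up with $k$. Fortunately the bound above only uses $\Psi_k$ and its $\tilde{x}$- and $\eta$-derivatives, which are integrals over a period of functions bounded uniformly in $k$: $J$, $\nabla J$, $g\leq1$ and $|\phi|=1$. We never differentiate in $\tau$. Moreover $\dot{\tilde{x}}$ in \cref{eq:18} contains no $a\omega u_k$ term. Consequently the constants are independent of $k$ and $\omega$, and $\omega_0$ can be chosen uniformly.

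\textbf{Step 3: conclusion by continuation.} Combining the estimates gives $|e(t)|\leq(C/\omega+T\varepsilon_1)e^{abcLT}$. Choose $\varepsilon$ smaller than $\mathrm{dist}(\bar K,\partial\tilde K)$, and then pick $\varepsilon_1$ and $\omega_0$ so that this bound is at most $\varepsilon$. A continuation argument then closes the proof: let $t^*$ be the first exit time of $\tilde{x}$ from the $\varepsilon$-neighbourhood of $\bar{x}$. On $[t_0,t^*)$ the solution stays in $\tilde{K}$, so all the bounds above apply and yield $|e|\leq\varepsilon$. Hence $t^*\geq t_0+T$, the solution exists on all of $[t_0,t_0+T]$, and \cref{eq:30} together with $|\eta(t)|\leq\rho$ hold on that interval.
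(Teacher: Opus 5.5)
Your proof is correct and follows the paper's overall plan. That plan is an $O(1/\omega)$ averaging estimate with constants independent of $k$ (\Cref{lemma:1}), the $k\to\infty$ limits of $\breve F_k$ and $\breve E_k$ (\Cref{lemma:2}), exact cancellation of the $d\,b\,v_k$ terms, and Gronwall plus continuation.

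\textbf{Averaging estimate.} This is where you genuinely differ. You integrate by parts along the actual trajectory, using the periodic zero-mean antiderivative $\Psi_k$. The error is then $\frac{1}{\omega}$ times $2\sup|\Psi_k|$ plus $T$ times bounds on $\partial_{\tilde x}\Psi_k\,\dot{\tilde x}$ and $\partial_\eta\Psi_k\,\dot\eta$. All of these are $k$-independent because no $\tau$-derivative ever appears.

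The paper instead follows Theorem~9.15 of \cite{BulloBook}. It passes to the near-identity coordinates $\check x(t)=\Phi^{Y[k,\omega t,\eta(t)]}_{-1/\omega}(\tilde x(t))$, built from the flow of the antiderivative field $Y$. The remainder $R$ is then expressed through pull-backs of the brackets $[Y,\tilde X]$, $[Y,\breve X]$, $[Y,Z]$, plus a term coming from $\dot\eta$.

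Your route is more elementary. It needs no flow-existence argument and no auxiliary compact set $\check K$. It also handles Carath\'eodory solutions with $L_\infty$ disturbances directly, through the a.e.\ chain rule for absolutely continuous functions. (In the paper, $\check x$ is likewise only absolutely continuous, not $C^1$, when $d$ is merely measurable.) The paper's transformation, for its part, yields an explicit perturbed averaged ODE, which is the form that extends to higher-order or Lie-bracket averaging.

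\textbf{The $k$-limit.} The paper's \Cref{lemma:2} gives uniform convergence on $\tilde K\times[-\rho,\rho]$ directly, with rate $1/(2\alpha(k))$, from the modulus-of-continuity estimate for the dyadic curve $\gamma_k$. You instead take the pointwise limits \cref{eq:25}--\cref{eq:26} as given. You then upgrade them to uniform convergence using equi-Lipschitz continuity of $\breve F_k$ and linearity in $\eta$. That is valid, but it leaves the equidistribution argument, which is the real content of \Cref{lemma:2}, entirely to the citation.

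\textbf{Two small points.} First, state $|\dot\eta|\le 2h\rho$ explicitly, since your integration by parts uses it. Second, in Step~3 the exit-time argument needs a little slack to close. Either make the Gronwall bound strictly less than $\varepsilon$, or, as the paper does, run the continuation on the last time $\tilde x$ stays in $\tilde K$.
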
%
A proof of \Cref{proposition:1} can be found in the \hyperlink{appendix}{Appendix}.

Next, we turn our attention to stability properties of \cref{eq:29}. To this end, we consider the slightly more general system%
\begin{equation}\label{eq:31}
\dot{\bar{x}}(t) \ = \ a\,b\,c\,\nabla\bar{J}_a(\bar{x}(t)) + b\,\bar{d}(t)
\end{equation}%
in $\mathbb{R}^n$, where $\bar{d}$ is an element of the space $L_\infty^n$ of Lebesgue measurable and essentially bounded $\mathbb{R}^n$-valued functions on $\mathbb{R}$, which is equipped with the usual essential supremum norm $\|\cdot\|$. Notice that, for every $k\in\mathbb{N}$, the dither signal $v_k\in{L_\infty^n}$ has norm $\|v_k\|\leq1$. Thus, for every $\delta>0$, every $k\in\mathbb{N}$, and every disturbance $d\in{L_\infty}$ with norm $\|d\|\leq\delta$, also the disturbance $\bar{d}:=d\,v_k\in{L_\infty^n}$ has norm $\|\bar{d}\|\leq\delta$. The disturbance $\bar{d}$ in \cref{eq:31} is subsequently viewed as an ``input disturbance'' in the sense of input-to-state stability (ISS). An ISS-type property of the gradient system \cref{eq:31} with respect to the disturbance $\bar{d}\in{L_\infty^n}$ requires at least a nonvanishing gradient. For this reason, we make the following assumption.%
\begin{assumption}\label{assumption:1}%
There exist real numbers $y_\ast$ and $y_0$ with $y_\ast>y_0$ such that the $y_0$-superlevel set of $\bar{J}_a$ is compact and such that the gradient of $\bar{J}_a$ is non-zero at every point $\bar{x}\in\mathbb{R}^n$ with $y_\ast>\bar{J}_a(\bar{x})\geq{y_0}$.%
\end{assumption}%
The $y_0$-superlevel set of $\bar{J}_a$ represents a set of admissible initial states and the $y_\ast$-superlevel set of $\bar{J}_a$ represents a set of desired states for which the value of $\bar{J}_a$ is large. For instance, $y_\ast$ might be the maximum value of $\bar{J}_a$, provided that $\bar{J}_a$ attains a global maximum. \Cref{assumption:1} demands that the averaged objective function $\bar{J}_a$ has no undesired critical points. But this does not necessarily exclude critical points or even local extrema of the original objective function $J$.%
\begin{remark}\label{remark:2}
In general, there is no guarantee that the averaged objective function $\bar{J}_a$ has fewer local extrema than the unknown original objective function $J$. Checking \Cref{assumption:1} requires knowledge about $J$, which is typically not provided in the context extremum seeking. A suitable choice of the radius $a>0$ requires, in particular, some knowledge about the depths and the widths of undesired local extrema of $J$.
\end{remark}%
Under \Cref{assumption:1}, the gradient system \cref{eq:31} has the following ISS-type property.%
\begin{proposition}\label{proposition:2}%
Suppose that \Cref{assumption:1} is satisfied with $y_\ast$ and $y_0$ as therein. Then, there exist an open neighborhood $U$ of the $y_0$-superlevel set of $\bar{J}_a$, a real number $y_0'<y_0$, a function $\beta$ of class $\mathcal{KL}$ with $\beta(s,0)=s$ for every $s\geq0$, a function $\gamma$ of class $\mathcal{K}$, and a positive real number $\delta$ such that the $y_0'$-superlevel set of the restriction of $\bar{J}_a$ to $U$ is compact and such that, for every $t_0\in\mathbb{R}$, every $\bar{x}_0\in{U}$ with $\bar{J}_a(\bar{x}_0)\geq{y_0'}$, and every $\bar{d}\in{L_\infty^n}$ with $\|\bar{d}\|\leq\delta$, the maximal solution $\bar{x}$ of \cref{eq:31} with initial condition $\bar{x}(t_0)=\bar{x}_0$ satisfies%
\begin{align}\label{eq:32}%
& y_\ast - \bar{J}_a(\bar{x}(t)) \ \leq \\
& \max\big\{\beta\big( \max\{y_\ast-\bar{J}_a(\bar{x}_0),0\},t-t_0\big),\ \gamma(\|\bar{d}\|)\big\} \nonumber
\end{align}%
for every $t\geq{t_0}$.%
\end{proposition}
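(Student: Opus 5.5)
We use $V(\bar{x}) := y_\ast - \bar{J}_a(\bar{x})$ as a Lyapunov-type function for the gradient system \cref{eq:31}. Its derivative along solutions is
\begin{equation*}
\dot V = -a\,b\,c\,|\nabla\bar{J}_a(\bar{x})|^2 - b\,\nabla\bar{J}_a(\bar{x})^\top\bar{d}(t).
\end{equation*}
Since $J$ is smooth, \cref{eq:04} shows that $\bar{J}_a$ is smooth, so $\nabla\bar{J}_a$ is locally Lipschitz and maximal solutions are unique.

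\textbf{Step 1: construct $U$ and $y_0'$.} Let $S_{y}$ denote the $y$-superlevel set of $\bar{J}_a$. By \Cref{assumption:1}, the set $S_{y_0}$ is compact, and the gradient is nonzero on $S_{y_0}\setminus S_{y_\ast}$. Because $S_{y_0}$ is compact, we can choose a bounded open neighborhood $U$ of $S_{y_0}$ whose closure is compact. Continuity of $\nabla\bar{J}_a$ and compactness then give a $y_0'<y_0$ with the following properties:
\begin{itemize}
\item[(i)] the set $\{x\in\overline{U}:\bar{J}_a(x)\geq y_0'\}$ is contained in a compact subset of $U$, so it is compact;
\item[(ii)] $\nabla\bar{J}_a\neq0$ on $\{x\in\overline{U}: y_0'\leq\bar{J}_a(x)<y_\ast\}$.
\end{itemize}
For (i), we separate the boundary $\partial U$ from $S_{y_0}$: on the compact set $\partial U$ we have $\bar{J}_a<y_0$, hence $\bar{J}_a\leq y_0-\epsilon$ there. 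Choosing $y_0'\in(y_0-\epsilon,y_0)$ gives (i). For (ii), the nonvanishing gradient on the compact set $S_{y_0}\cap\{\bar J_a\le y_\ast-\mu\}$ extends to a slightly lower level, after shrinking $y_0'$ if necessary.

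The delicate point in (ii) is the region near the level $y_\ast$, where the gradient may tend to $0$. We handle it with a comparison function. For $r\in(0,y_\ast-y_0']$, set
\begin{equation*}
\alpha(r) := \min\{\,|\nabla\bar{J}_a(x)|^2 : x\in\overline{U},\ \bar J_a(x)\ge y_0',\ V(x)\ge r\,\}.
\end{equation*}
This minimum is positive by compactness and (ii), and it is nondecreasing in $r$. We then minorize $\alpha$ by a continuous positive-definite function, still called $\alpha$, and extend it to be constant beyond $y_\ast-y_0'$.

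\textbf{Step 2: the dissipation inequality.} Let $G:=\max|\nabla\bar{J}_a|$ over the compact set from (i). Whenever $V(\bar{x})\geq r$, we have
\begin{equation*}
\dot V \le -a b c\,\alpha(r) + b G\|\bar d\|.
\end{equation*}
Define $\gamma(s):=\alpha^{-1}(2Gs/(ac))$, after making $\alpha$ strictly increasing and adjusting it to be of class $\mathcal{K}$. Then $V\geq\gamma(\|\bar d\|)$ implies $\dot V\leq -\tfrac{abc}{2}\alpha(V)$.

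Next, choose $\delta>0$ so small that $\gamma(\delta)<y_\ast-y_0$ and $2G\delta/(ac)$ lies in the range of $\alpha$. Invariance follows: a solution starting with $\bar{J}_a\geq y_0'$ in $U$ has $V\leq y_\ast-y_0'$, and $V$ is nonincreasing whenever $V>\gamma(\|\bar d\|)$. Hence the solution stays in the compact set of (i). It therefore never reaches $\partial U$, exists for all $t\geq t_0$, and the estimates are valid along the whole trajectory.

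\textbf{Step 3: the $\mathcal{KL}$ bound.} Let $\beta(s,\cdot)$ be the solution of $\dot w=-\tfrac{abc}{2}\alpha(w)$ with $w(0)=s$. This gives $\beta(s,0)=s$, and the standard comparison lemma shows $\beta\in\mathcal{KL}$. Three cases arise:
\begin{itemize}
\item while $V>\gamma(\|\bar d\|)$, comparison yields $V(\bar x(t))\leq\beta(V(\bar x_0),t-t_0)$;
\item once $V$ drops to the level $\gamma(\|\bar d\|)$ or below, it cannot exceed that level again;
\item the case $V<0$ needs no argument, and it is covered by $\max\{\cdot,0\}$ in \cref{eq:32}.
\end{itemize}
Combining these gives \cref{eq:32}.

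The main obstacle is Step 1/2, namely producing a uniform positive lower bound $\alpha$ on $|\nabla\bar J_a|^2$ as a function of $V$ on the compact region. This is needed both near the sublevel boundary $y_0'$ and close to $y_\ast$, where it degenerates. It requires careful use of compactness together with the choice of $U$ and $y_0'$.
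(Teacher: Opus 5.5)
Your proof is correct and takes essentially the route the paper points to. The paper omits the argument and cites the Lyapunov/comparison proof of Theorem~1 in \cite{Sontag1989}, whose ingredients your proposal supplies: $V=y_\ast-\bar{J}_a$ as ISS-Lyapunov function, a compactness-based positive-definite lower bound on $|\nabla\bar{J}_a|^2$ in terms of $V$ on the forward-invariant compact set, and a comparison lemma for the $\mathcal{KL}$ bound. One detail needs tidying: build $\beta$ from $\min\{\alpha(r),r\}$, as in the standard comparison lemma, so that solutions of $\dot w=-\tfrac{abc}{2}\alpha(w)$ cannot reach zero in finite time and $\beta(\cdot,t)$ stays strictly increasing.
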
%
The proof of \Cref{proposition:2} is very similar to the proof of Theorem~1 in \cite{Sontag1989} and is omitted here.
\begin{figure*}%
\centering$\begin{matrix}
\includegraphics{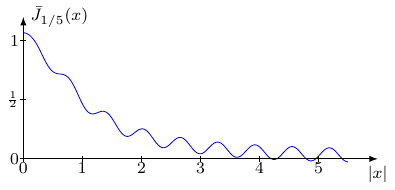}\qquad & \qquad\includegraphics{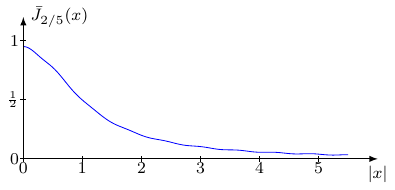} \\ 
\includegraphics{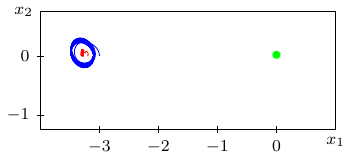}\qquad & \qquad\includegraphics{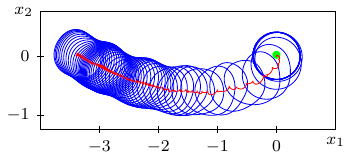} \\ \includegraphics{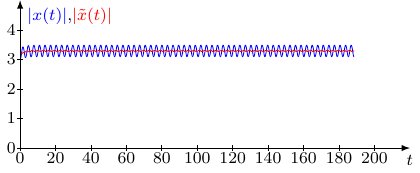}\qquad & \qquad\includegraphics{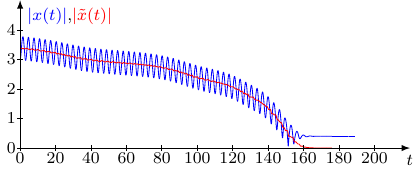} \end{matrix}$%
\caption{Simulation results for the objective function $J$ in \cref{eq:34} and dimension $n=2$. The upper row contains plots of the averaged objective function $\bar{J}_a$ in \cref{eq:04} for $a=1/5$ (left) and $a=2/5$ (right). The other plots show the state $x(t)$ of the closed-loop system \cref{eq:16} and the state $\tilde{x}(t)$ of the transformed closed-loop system \cref{eq:18}.}%
\label{fig:06}%
\end{figure*}%

At this stage, we know from \Cref{proposition:1} that the solutions of \cref{eq:18:a}--\cref{eq:18:b} approximate the solutions of \cref{eq:29} for sufficiently large $\omega\in\mathbb{R}_+$ and $k\in\mathbb{N}$. We also know from \Cref{proposition:2} that \cref{eq:29} shows an ISS-type behavior if \Cref{assumption:1} is satisfied. It is therefore not surprising that \cref{eq:18:a}--\cref{eq:18:b} shows a practical ISS-type behavior if $\omega\in\mathbb{R}_+$ and $k\in\mathbb{N}$ are sufficiently large. The following theorem provides a precise formulation of this practical ISS-type property.%
\begin{theorem}\label{theorem:1}
Suppose that \Cref{assumption:1} is satisfied with $y_\ast$ and $y_0$ as therein. Let $\tilde{K}_0$ be the $y_0$-superlevel set of $\bar{J}_a$. Let $\rho_0$ be a positive real number. Then, there exist a function $\beta$ of class $\mathcal{KL}$ with $\beta(s,0)=s$ for every $s\geq0$, there exists a function $\gamma$ of class $\mathcal{K}$, and positive constants $\delta$, $\rho$ with the following property: For every arbitrary small $\varepsilon>0$, there exist sufficiently large $\omega_0>0$ and $k_0>0$  such that, for every $\omega\geq\omega_0$, every $k\geq{k_0}$, every $t_0\in\mathbb{R}$, every $\tilde{x}_0\in\tilde{K}_0$, every $\eta_0\in\mathbb{R}$ with $|\eta_0|\leq\rho_0$, and every $d\in{L_\infty}$ with $\|d\|\leq\delta$, the maximal solution $(\tilde{x},\eta)$ of \cref{eq:18} with initial condition $\tilde{x}(t_0)=\tilde{x}_0$, $\eta(t_0)=\eta_0$ satisfies%
\begin{align}
& y_\ast - \bar{J}_a(\tilde{x}(t)) \ \leq \label{eq:33} \\
& \max\big\{\beta\big(\max\{y_\ast-\bar{J}_a(\tilde{x}_0),0\},t-t_0\big), \gamma(\|d\|) \big\} + \varepsilon \nonumber
\end{align}%
and $|\eta(t)|\leq\rho$ for every $t\geq{t_0}$.%
\end{theorem}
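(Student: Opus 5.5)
The proof combines the approximation result of \Cref{proposition:1} on finite horizons with the ISS-type estimate of \Cref{proposition:2} for the averaged system \cref{eq:29}. The estimate is then extended to all $t\geq t_0$ by restarting on consecutive windows of fixed length $T$.

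\emph{Choice of the data.} From \Cref{proposition:2} we take $U$, $y_0'<y_0$, $\beta$, $\gamma$ and $\delta_2$. We set $\bar{K}:=\{\bar{x}\in U:\bar{J}_a(\bar{x})\geq y_0''\}$ for some $y_0''\in(y_0',y_0)$; this is a compact neighborhood of $\tilde{K}_0$. We also fix a compact $\tilde{K}\subset U$ whose interior contains $\bar{K}$. Then we pick $\delta\leq\delta_2$ and $\rho\geq\rho_0$ so that the hypothesis of \Cref{proposition:1} holds on $\tilde{K}$, i.e.\ $|J(\tilde{x}+as)|+\delta\leq\rho$. The condition $\bar{d}=d\,v_k$ with $\|\bar{d}\|\leq\delta$ lets us apply \Cref{proposition:2} to \cref{eq:29}. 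We shrink $\delta$ further so that $y_\ast-\gamma(\delta)>y_0''$ with some margin. This guarantees that solutions of \cref{eq:29} starting in $\bar{K}$ never leave $\bar{K}$: by \cref{eq:32} with $\beta(s,0)=s$, the quantity $y_\ast-\bar{J}_a$ never exceeds the larger of its initial value and $\gamma(\delta)$.

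\emph{Window argument.} We choose $T>0$ such that $\beta(s,T)\leq s/2$ roughly uniformly on the relevant compact range of $s$; it suffices to have $\beta(s,T)<s$ with a uniform gap on $[\gamma(\delta)\text{-ish},\,y_\ast-y_0'']$. Given $\varepsilon>0$, uniform continuity of $\bar{J}_a$ on $\tilde{K}$ gives $\varepsilon'$ such that $|\tilde{x}-\bar{x}|\leq\varepsilon'$ implies $|\bar{J}_a(\tilde{x})-\bar{J}_a(\bar{x})|\leq\varepsilon$. We then apply \Cref{proposition:1} with $(T,\varepsilon')$ to obtain $\omega_0$ and $k_0$. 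On $[t_0,t_0+T]$ the comparison solution $\bar{x}$ stays in $\bar{K}$, so $\tilde{x}$ is $\varepsilon'$-close to it and $|\eta|\leq\rho$. Combining this with \cref{eq:32} gives \cref{eq:33} on the first window.

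\emph{Restarting, the main obstacle.} At $t_1=t_0+T$ we restart: a new comparison solution $\bar{x}$ of \cref{eq:29} starts at $\tilde{x}(t_1)$. This requires $\tilde{x}(t_1)\in\bar{K}$ and $|\eta(t_1)|\leq\rho$. The second condition holds, and \Cref{proposition:1} allows $|\eta_0|\leq\rho$, which is why $\rho$ rather than $\rho_0$ appears there. The hard part is keeping errors from accumulating across windows. The plan is to use the contraction over one window: after time $T$ the bound drops to $\max\{\beta(s,T),\gamma\}+\varepsilon$, where $\beta(s,T)$ is strictly smaller than $s$ by a definite amount whenever $s$ exceeds roughly $\gamma(\|d\|)+\varepsilon$. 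Hence the perturbed level $y_\ast-\bar{J}_a(\tilde{x}(t_j))$ does not grow, and it stays in $\bar{K}$ provided $\varepsilon$ is small. To obtain a single $\mathcal{KL}$ bound as in \cref{eq:33} rather than a windowed one, the concatenation must be handled carefully. Either choose $T$ depending on $\varepsilon$ so that after one window $\beta(\cdot,T)$ is already below $\varepsilon$ on the compact range, after which the trajectory remains in the invariant region $\{y_\ast-\bar{J}_a\leq\max(\gamma(\|d\|),\varepsilon)+\varepsilon\}$. Or, more likely, keep $\beta$ fixed but enlarge it to $\hat{\beta}(s,t)$, built from $\beta$ by the semigroup-type bound $\beta(\beta(s,T),t-T)$, which is dominated by a $\mathcal{KL}$ function independent of $\varepsilon$. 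For large $t$, the estimate then follows from ultimate boundedness in $\{y_\ast-\bar{J}_a\leq\gamma(\|d\|)+2\varepsilon\}$, and the error is absorbed by renaming $\varepsilon$.

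This trade-off is the delicate step: $\beta$, $\gamma$, $\delta$, $\rho$ must be fixed before $\varepsilon$, while $T$ may depend on $\varepsilon$.
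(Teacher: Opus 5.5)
Your proposal is correct and follows the route the paper indicates: the paper omits the proof, saying only that it follows from \Cref{proposition:1,proposition:2} with the same $\beta,\gamma$ and a reduced $\delta$. Your first option is the one to commit to, because choosing $T=T(\varepsilon)$ with $\beta(y_\ast-y_0'',T)\leq\varepsilon$ and using $\beta(s,t)\leq\beta(s,0)=s$ resets every window-start value of $y_\ast-\bar{J}_a(\tilde{x})$ to at most $\max\{\varepsilon,\gamma(\|d\|)\}+\varepsilon$ and adds only one further approximation error inside each window, so nothing accumulates and renaming $\varepsilon$ gives \cref{eq:33} with exactly the $\beta,\gamma$ of \Cref{proposition:2}; the enlarged $\hat{\beta}$ of your second option is unnecessary, and that set is kept only at window starts rather than being invariant in between.
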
%
\Cref{theorem:1} can be easily derived from \Cref{proposition:1,proposition:2}. The comparison functions $\beta$ and $\gamma$ in \Cref{theorem:1} are the same as in \Cref{proposition:2}. Only the maximum disturbance magnitude $\delta>0$ needs to be sufficiently reduced. For this reason, we omit the proof of \Cref{theorem:1}.%
\begin{remark}\label{remark:3}
\Cref{theorem:1} provides sufficient conditions for convergence of the state $\tilde{x}$ into a superlevel set of the $\bar{J}_a$. Notice, however, that the shape of the original objective function $J$ may differ significantly from the shape of the averaged objective function $\bar{J}_a$ when $a>0$ is large. In particular, it is possible that the location of a global maximum point of $J$ differs significantly from a global maximum point of $\bar{J}_a$ when $a>0$ is large. The proposed extremum seeking method maximizes $\bar{J}_a$ but not necessarily $J$. To guarantee maximization of $J$, one needs the additional assumptions that the $y_\ast$-superlevel set of $\bar{J}_a$ in \Cref{theorem:1} is ``small'' and that it contains a global maximum point of $J$. Ideally, the $y_\ast$-superlevel set of $\bar{J}_a$ in \Cref{theorem:1} only consists of a global maximum point of $J$. In general, this is, however, not the case; see \Cref{sec:5.1} for an example.
\end{remark}

\section{Numerical examples}\label{sec:5}
We test the proposed method in numerical simulations.%
\begin{figure*}%
\centering$\begin{matrix} \includegraphics{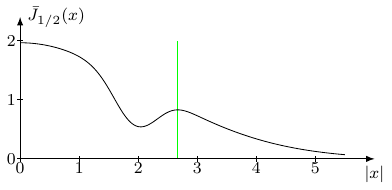} \qquad & \qquad \includegraphics{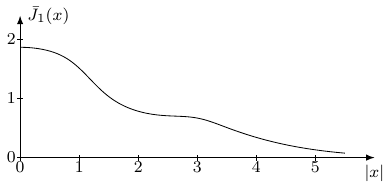} \\ \includegraphics{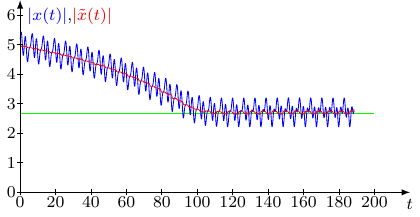}\qquad & \qquad\includegraphics{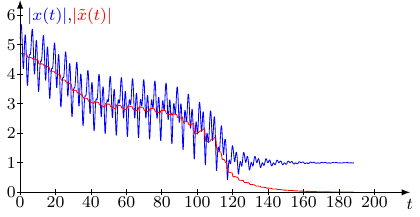} \\ \includegraphics{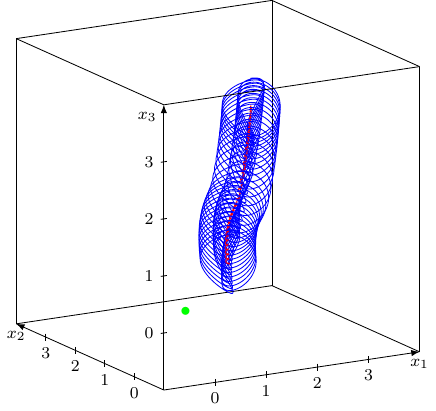}\qquad & \qquad\includegraphics{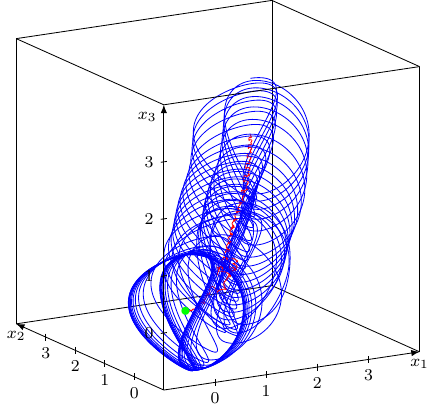} \end{matrix}$%
\caption{Simulation results for the objective function $J$ in \cref{eq:03} and dimension $n=3$. The upper row contains plots of the averaged objective function $\bar{J}_a$ in \cref{eq:04} for $a=1/2$ (left) and $a=1$ (right). The other plots show the state $x(t)$ of the closed-loop system \cref{eq:16} and the state $\tilde{x}(t)$ of the transformed closed-loop system \cref{eq:18}.}%
\label{fig:07}%
\end{figure*}%
\subsection{Example 1}\label{sec:5.1}
In the first example, the dimension $n$ of the state space is $n=2$ and the objective function $J$ is given by%
\begin{equation}\label{eq:34}
J(x) \ = \ \tfrac{1}{1+|x|^2} + \tfrac{1}{10}\,\sin(10\,|x|).
\end{equation}%
The function $J$ may be interpreted as the strength of a quadratically decaying signal in $\mathbb{R}^2$ with source at the origin and sinusoidal spatial perturbations. In dimension $n=2$, the proposed extremum seeking method is the same as the one from \cite{Zhang20072} for moderately unstable systems and for autonomous vehicle target tracking without position measurements (cf.~\Cref{remark:1}). One may view the two-dimensional single-integrator model as an oversimplified model for a source-seeking vehicle. The averaged objective function $\bar{J}_a$ is shown in the upper row of \Cref{fig:06} for $a=1/5$ and $a=2/5$. The sinusoidal spatial perturbations are clearly visible for $a=1/5$, but they are completely ``washed out'' for $a=2/5$. This is due to the fact that, for $a=2/5$, the diameter $2a=4/5$ of the disk of local averaging is larger than then the spatial period $2\pi/10$ of the perturbations. The middle and last row of \Cref{fig:06} show simulation results for $\omega=2$, $a\in\{1/5,2/5\}$, $b=1$, $h=1$, disturbance $d=0$, and initial conditions $x(0)=[-3,0]^\top$, $\eta(0)=0$. For $a=1/5$, the closest local maximizer of $\bar{J}_a$ to $x(0)=[-3,0]^\top$ is at $[-3.29,0]^{\top}$. One can observe convergence to this local (but not global) maximizer of $\bar{J}_a$ for $a=1/5$ in the left column of \Cref{fig:06}. In the right column of \Cref{fig:06}, one can observe convergence to the global maximizer of $\bar{J}_a$ at the origin for $a=2/5$. Notice, however, that the origin is \emph{not} a global maximizer of the original objective function $J$.

\subsection{Example 2}\label{sec:5.2}
In the second example, the dimension $n$ of the state space is $n=3$ and the objective function $J$ is given by \cref{eq:03}. One can see in \Cref{fig:01} that $J$ attains its unique global maximum at the origin. On the other hand, there are also spherical sets of local extrema around the origin: A spherical set of local minimizers of radius $r_{\text{min}}\approx{2.03}$ and a spherical set of local maximizers of radius $r_{\text{max}}\approx{2.55}$. Outside the closed ball of radius $r_{\text{min}}$, the gradient flow of $J$ only leads to convergence to the sphere of radius $r_{\text{max}}$, which means that the global optimizer cannot be reached. However, it is possible to eliminate the undesired local extrema of $J$ by considering instead the averaged objective function $\bar{J}_a$ in \cref{eq:04} with sufficiently large radius $a>0$. The upper row of \Cref{fig:07} shows $\bar{J}_a$ for $a=1/2$ and $a=1$. For $a=1/2$, one can see that the radius of averaging is not large enough to ``wash out'' the undesired local extrema of $J$. If, however, the radius is increased to $a=1$, then the averaged objective function has no other critical point than its global maximizer at the origin.

Suppose that the initial state of the single-integrator \cref{eq:01} is $x(0)=[3,3,3]^\top\in\mathbb{R}^3$. Based on the above observations and \Cref{proposition:1}, it is reasonable to expect that our extremum seeking method will fail to induce convergence to the origin if $a=1/2$, but it will succeed if $a=1$. The plots in the center and lower row of \Cref{fig:07} confirm this expectation. The plots show the states of the closed-loop system~\cref{eq:16} and the transformed closed-loop system~\cref{eq:18} for the parameters $k=2$, $\omega=1$, $a\in\{1/2,1\}$, $b=1$, $h=1$, disturbance $d=0$, and the initial condition $x(0)=[3, 3, 3]^\top$, $\eta(0)=0$. The plots in the left column are generated for $a=1/2$ and the plots in the right column are generated for $a=1$. For $a=1/2$, one can observe convergence into the spherical set of local maximizers of radius $\approx2.67$. By contrast, for $a=1$, one can observe convergence towards the global maximizer at the origin.

\begin{figure}%
\centering$\begin{matrix}\includegraphics{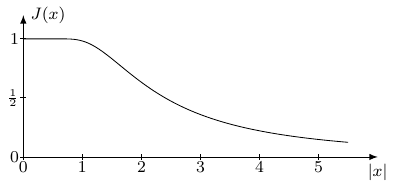}\\\includegraphics{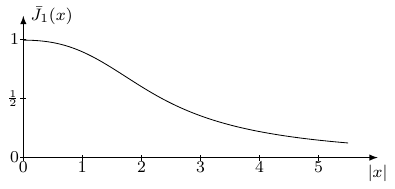}\\\includegraphics{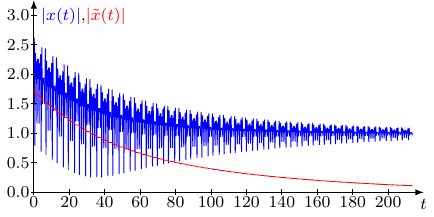}\end{matrix}$
\caption{Simulation results for the objective function $J$ in \cref{eq:35} and dimension $n=4$. The upper and center plots show the original objective function $J$ and the averaged objective function $\bar{J}_a$ for $a=1$, respectively. The bottom plot show the state $x(t)$ of the closed-loop system \cref{eq:16} and the state $\tilde{x}(t)$ of the transformed closed-loop system \cref{eq:18}.}%
\label{fig:08}
\end{figure}
\subsection{Example 3}
In the third example, the dimension $n$ of the state space is $n=4$ and the objective function $J$ is given by%
\begin{equation}\label{eq:35}
J(x) \ = \ 1 - \exp\big(-\tfrac{4}{|x|^2}\big)
\end{equation}%
for every $x\neq0$ and by $J(0)=1$. This objective function attains its unique global maximum at the origin, as shown in the upper plot of \Cref{fig:08}. The gradient of $J$ around the origin is very weak, which may result in a very slow speed of convergence for ascent along the gradient flow of $J$. Here, a local average can help to re-shape the objective function so that it becomes less ``flat.'' The center plot of \Cref{fig:08} show the averaged objective function $\bar{J}_a$ for $a=1$. One can observe that $\bar{J}_1$ is less ``flat'' around the origin compared to $J$. The bottom plot of \Cref{fig:08} shows a simulation result for the parameters $k=2$, $\omega=1$, $a=1$, $b=1$, $h=1$, disturbance $d=0$, and the initial condition $x(0)=[1,1,1,1]^\top$, $\eta(0)=0$. Although the speed of convergence of the transformed system state $\tilde{x}$ towards the origin is slow, it is still significantly faster than the speed of convergence for the gradient flow of $J$. However, since $a=1$, the actual system state $x$ only converges towards the sphere of radius $1$ centered at the origin. Therefore, the proposed method only results in relatively fast convergence for the time-averaged system state $\tilde{x}$ towards the origin, but not for the actual system state $x$. The actual system state tends to a spherical motion around the origin.

\section{Conclusions and outlook}
We have presented a new design strategy for the dither signals of a classic extremum seeking control scheme. This particular choice of the periodic exploration signals results in an approximation of the gradient of an averaged objective function. The local average of the original objective function is taken over a ball of a user prescribed radius $a>0$. Since the original objective function is analytically unknown, a suitable choice of the radius $a>0$ is generally difficult. A large radius $a>0$ can help to ``wash out'' local extrema and to increase the speed of converge towards a ``flat'' global extremum. However, for large $a>0$, the averaged objective function may significantly differ from the original objective function. Based on these observations, it might be promising to investigate extensions of the proposed scheme with a time-varying radius $a\colon[\vphantom{]}0,\infty\vphantom{(})\to(0,\infty)$. For instance, one could investigate a monotonically decreasing radius or an adaptive choice of the radius. In any case, a suitable choice or design of such a time-varying radius, will require some \emph{a priori} knowledge about the shape of the unknown objective function.

\bibliographystyle{plain}
\bibliography{bibFile}

\appendix\hypertarget{appendix}{}
\section{Proof of \texorpdfstring{\Cref{proposition:1}}{Proposition~\ref{proposition:1}}}
Throughout this appendix, we use the same notation as in \Cref{proposition:1}: Let $\bar{K}$ and $\tilde{K}$ be compact subsets of $\mathbb{R}^n$ such that $\bar{K}$ is contained in the interior of $\tilde{K}$. Let $\delta,\rho>0$ such that%
\begin{equation}\label{eq:36}
\big|J(\tilde{x})+ a\,s\big| + \delta \ \leq \ \rho
\end{equation}%
for every $\tilde{x}\in\tilde{K}$ and every $s\in\partial\mathbb{B}$. Our goal is to prove the asserted approximation property~\cref{eq:30}. To this end, we will first derive two lemmas in \Cref{sec:A.1,sec:A.2}, and then prove \Cref{proposition:1} in \Cref{sec:A.3}

\subsection{Approximation for large \texorpdfstring{$\boldsymbol{\omega}$}{omega}}\label{sec:A.1}
The first lemma implies that, for every sufficiently large $\omega\in\mathbb{R}_+$ and every $k\in\mathbb{N}$, the solutions of subsystem \cref{eq:18:a}--\cref{eq:18:b} approximate the solutions of \cref{eq:24}.%
\begin{lemma}\label{lemma:1}
There exist sufficiently large $\omega_1,c_1,c_2\in\mathbb{R}_+$ such that, for every $\omega\geq\omega_1$, every $k\in\mathbb{N}$, every $d\in{L_\infty}$ with $\|d\|\leq\delta$, every solution $(\tilde{x},\eta)\colon{I}\to\mathbb{R}^n\times\mathbb{R}$ of \cref{eq:18}, and all $t_0,t_2\in{I}$ with $t_0<t_2$, the following implication holds: If $\tilde{x}(t)\in\tilde{K}$ and if $\eta(t)\in[-\rho,\rho]$ for every $t\in[t_0,t_2]$, then%
\begin{align*}
& \Big|\tilde{x}(t_1) - \tilde{x}(t_0) - \int_{t_0}^{t_1} \Big(\breve{F}_k(\tilde{x}(t)) + \breve{E}_k(\eta(t)) \allowdisplaybreaks \\
& \qquad\qquad + d(t)\,b\,v_k(\omega{t})\Big)\,\mathrm{d}t\Big| \ \leq \ \frac{1}{\omega}\,\big(c_1 + c_2\,|t_1-t_0|\big)
\end{align*}%
for every $t_1\in[t_0,t_2]$.%
\end{lemma}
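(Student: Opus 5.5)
The approach is a direct averaging estimate in which the fast dither is never differentiated. This matters because $\dot U_k$ and $\frac{\mathrm{d}}{\mathrm{d}\tau}v_k$ contain the frequencies $2^{(n-2)k-1}$, so any integration-by-parts argument would give constants that blow up with $k$. Instead I use only the facts $|U_k|=1$, $0\leq g\leq 1$ and $|v_k|\leq1$, which hold uniformly in $k$, together with the Lipschitz continuity of $J$ and of the slow states.

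\textbf{Step 1 (uniform constants).} Let $C\subset\mathbb{R}^n$ be a closed ball containing $\tilde{K}+a\,\mathbb{B}$. Since $C$ is convex and compact, $J$ is bounded by some $M_J$ on $C$ and Lipschitz there with some constant $L_J$. Assume $\tilde{x}(t)\in\tilde{K}$ and $|\eta(t)|\leq\rho$ on $[t_0,t_2]$. Then \cref{eq:18} gives the bounds
\[
|\dot{\tilde{x}}(t)|\leq b(M_J+\rho+\delta)=:M_x,\qquad |\dot\eta(t)|\leq h(\rho+M_J+\delta)=:M_\eta
\]
for almost every $t$, and both are independent of $k$ and $\omega$. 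The averaged fields are Lipschitz uniformly in $k$. Indeed, $|\breve{F}_k(z)-\breve{F}_k(z')|\leq bL_J|z-z'|$ for $z,z'\in\tilde{K}$, because $g\circ\theta_k\leq1$ and $|\phi|=1$. Likewise $|\breve{E}_k(\eta)-\breve{E}_k(\eta')|\leq b|\eta-\eta'|$, and $\breve{F}_k$, $\breve{E}_k$ are bounded on $\tilde K$ and $[-\rho,\rho]$, respectively, by constants independent of $k$.

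\textbf{Step 2 (splitting into periods).} Set $P:=2\pi/\omega$ and $s_j:=t_0+jP$. Let $N$ be the largest integer with $s_N\leq t_1$. The disturbance term $d(t)\,b\,v_k(\omega t)$ appears identically in \cref{eq:18:b} and in the comparison integral, so it cancels exactly. What remains is to compare $\int_{t_0}^{t_1}$ of the drift $J(\tilde{x}(t)+aU_k(\omega t))\,b\,v_k(\omega t)-\eta(t)\,b\,v_k(\omega t)$ with $\int_{t_0}^{t_1}\big(\breve{F}_k(\tilde{x}(t))+\breve{E}_k(\eta(t))\big)\,\mathrm{d}t$.

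On each full interval $[s_j,s_{j+1}]$ I freeze the slow states at $s_j$.

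1. Replacing $\tilde{x}(t)$ by $\tilde{x}(s_j)$ inside $J$ costs at most $bL_JM_xP$ pointwise, hence at most $bL_JM_xP^2$ after integration over the interval.
2. Replacing $\eta(t)$ by $\eta(s_j)$ costs at most $bM_\eta P^2$.
3. The frozen integrands are integrated over exactly one period of the $2\pi$-periodic dither. The substitution $\tau=\omega t$ then gives exactly $P\,\breve{F}_k(\tilde{x}(s_j))+P\,\breve{E}_k(\eta(s_j))$, by \cref{eq:22}--\cref{eq:23} and periodicity.
4. By Step 1, these exact values differ from $\int_{s_j}^{s_{j+1}}\big(\breve{F}_k(\tilde{x}(t))+\breve{E}_k(\eta(t))\big)\,\mathrm{d}t$ by at most $(bL_JM_x+bM_\eta)P^2$.

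Summing over $j<N$ and using $N\leq\omega|t_1-t_0|/(2\pi)$ gives a total error of order $P^2N\leq c_2|t_1-t_0|/\omega$.

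\textbf{Step 3 (remainder and main obstacle).} On the leftover interval $[s_N,t_1]$, which has length less than $P$, both integrands are bounded by constants independent of $k$. This piece therefore contributes at most $c_1/\omega$, and the claimed estimate holds with, say, $\omega_1=1$.

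The only real obstacle is ensuring that no constant depends on $k$. This is exactly why I freeze the slow variables over whole dither periods and never differentiate the dither. A secondary point is that $J$ must be Lipschitz on a convex set containing all the points $\tilde{x}+aU_k$ that occur, which is why $C$ is taken to be a closed ball around $\tilde{K}+a\,\mathbb{B}$.
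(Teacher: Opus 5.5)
Your proof is correct, and it takes a genuinely different route from the paper's. The paper follows the proof of Theorem~9.15 in \cite{BulloBook}. It forms the $2\pi$-periodic antiderivative $Y[k,\tau,\eta]$ of the zero-mean part $\tilde X[k,\tau,\eta]-\breve X[k,\eta]$ and passes to the near-identity coordinates $\check x(t)=\Phi^{Y[k,\omega t,\eta(t)]}_{-1/\omega}(\tilde x(t))$. It then computes $\dot{\check x}$ via pull-backs and Lie brackets as the averaged field plus a remainder $R$ that is bounded uniformly in $k$. Finally it returns to $\tilde x$ using $|\tilde x-\check x|\le c_\Phi/\omega$ and the Lipschitz constant of $\breve X$. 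You instead freeze $(\tilde x,\eta)$ over each dither period. This works because, after the change of variables \cref{eq:17}, the right-hand side of \cref{eq:18} carries no factor $\omega$, so the slow states are Lipschitz in time with constants independent of $k$ and $\omega$. Your argument is more elementary in three ways. It uses only Lipschitz continuity of $J$ on a ball containing $\tilde K+a\,\mathbb{B}$, with no flows, pull-backs or brackets. It yields explicit constants, e.g.\ $c_1=4\pi b(M_J+\rho)$ and $c_2=4\pi b(L_JM_x+M_\eta)$, valid for every $\omega>0$. And it never differentiates a transformed curve, which is convenient since $\eta$ is only absolutely continuous when $d\in L_\infty$. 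In exchange, the paper's transformation produces a curve $\check x$ close to $\tilde x$ that satisfies the averaged equation up to a small remainder pointwise in time. That is the standard entry point to higher-order and Lie-bracket averaging, but for the first-order integral estimate in this lemma your freezing argument suffices. One correction to your motivation: the paper's proof, like the classical integration-by-parts averaging argument, never differentiates the dither either. It only uses the $\tau$-antiderivative $Y$, whose size and $\tilde x$-derivatives are bounded independently of $k$, so that route would not produce $k$-dependent constants.
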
%
\begin{proof}
We follow the proof of Theorem~9.15 in \cite{BulloBook}. For every $k\in\mathbb{N}$, every $\tau\in\mathbb{R}$, and every $\eta\in\mathbb{R}$, define smooth vector fields $\tilde{X}[k,\eta,\tau]$, $\breve{X}[k,\eta]$, $Y[k,\tau,\eta]$, and $Z[k,\tau]$ on $\mathbb{R}^n$ by%
\begin{align*}
\tilde{X}[k,\tau,\eta](\tilde{x}) & \ := \ J\big(\tilde{x} + a\,U_k(\tau)\big)\,b\,v_k(\tau) - \eta\,b\,v_k(\tau), \allowdisplaybreaks \\
\breve{X}[k,\eta](\tilde{x}) & \ := \ \frac{1}{2\pi}\int_0^{2\pi}\tilde{X}[k,\tau,\eta](\tilde{x})\,\mathrm{d}\tau, \allowdisplaybreaks \\
Y[k,\tau,\eta](\tilde{x}) & \ := \ \int_0^\tau\big(\tilde{X}[k,\tau,\eta](\tilde{x}) - \breve{X}[k,\eta](\tilde{x})\big)\,\mathrm{d}\sigma, \allowdisplaybreaks \\
\tilde{Z}[k,\tau](\tilde{x}) & \ := \ v_k(\tau).
\end{align*}%
For every $k\in\mathbb{N}$, every $\tau\in\mathbb{R}$, every $\eta\in\mathbb{R}$, and every $d\in\mathbb{R}$, define a smooth real-valued function $E[k,\tau,\eta,d]$ on $\mathbb{R}^n$ by%
\begin{equation*}
E[k,\tau,\eta,d](\tilde{x}) \ := \ - h\,\eta + h\,J\big(\tilde{x} + a\,U_k(\tau)\big) + h\,d.
\end{equation*}%
Then, for every $\omega\in\mathbb{R}_+$, every $k\in\mathbb{N}$, and every $d\in{L_\infty}$, the transformed closed-loop system \cref{eq:18} can be written as%
\begin{subequations}\label{eq:37}%
\begin{align}%
\!\!\!\!\!\!\dot{\tilde{x}}(t) & = \tilde{X}[k,\omega{t},\eta(t)](\tilde{x}(t)) + d(t)\,b\,Z[k,\omega{t}](\tilde{x}(t)),\!\!\!\! \allowdisplaybreaks \\
\!\!\!\!\!\!\dot{\eta}(t) & = E[k,\omega{t},\eta(t),d(t)](\tilde{x}(t))
\end{align}%
\end{subequations}%
and system \cref{eq:24} can be written as%
\begin{equation}\label{eq:38}
\dot{\breve{x}}(t) \ = \ \breve{X}[k,\eta(t)](\breve{x}(t)) + d(t)\,b\,Z[k,\omega{t}](\breve{x}(t)).
\end{equation}%
The proof is complete if we can show that there exist sufficiently large $\omega_1,c_1,c_2\in\mathbb{R}_+$ such that, for every $\omega\geq\omega_1$, every $k\in\mathbb{N}$, every $d\in{L_\infty}$ with $\|d\|\leq\delta$, every solution $(\tilde{x},\eta)\colon{I}\to\mathbb{R}^n\times\mathbb{R}$ of \cref{eq:37}, and all $t_0,t_2\in{I}$ with $t_0<t_2$, the following implication holds: If $\tilde{x}(t)\in\tilde{K}$ and if $\eta(t)\in[-\rho,\rho]$ for every $t\in[t_0,t_2]$, then%
\begin{align}%
& \Big|\tilde{x}(t_1) - \tilde{x}(t_0) - \int_{t_0}^{t_1} \Big(\breve{X}[k,\eta(t)](\tilde{x}(t)) \label{eq:39} \allowdisplaybreaks \\
&  + d(t)\,b\,Z[k,\omega{t}](\tilde{x}(t))\Big)\,\mathrm{d}t\Big| \ \leq \ \frac{1}{\omega}\,\big(c_1 + c_2\,|t_1-t_0|\big)\nonumber
\end{align}%
for every $t_1\in[t_0,t_2]$.

For every $k\in\mathbb{N}$, every $\tau\in\mathbb{R}$, and every $\eta\in\mathbb{R}$, let $(t,\tilde{x})\mapsto\Phi^{Y[k,\tau,\eta]}_t(\tilde{x})$ denote the \emph{flow} of the vector field $Y[k,\tau,\eta]$ on $\mathbb{R}^n$ (see, e.g., \cite{BulloBook}). Let $\check{K}$ be a compact subset of $\mathbb{R}^n$ such that $\tilde{K}$ is contained in the interior of $\check{K}$. Using that $\|U_k\|,\|v_k\|\leq1$ and that $\tau\mapsto{Y[k,\tau,\eta](\tilde{x})}$ is $2\pi$-periodic, one can show that there exist sufficiently large $\omega_1,c_\Phi>0$ such, for every $k\in\mathbb{N}$, every $\tau\in\mathbb{R}$, every $\eta\in[-\rho,\rho]$, and every $t\in[-1/\omega_1,1/\omega_1]$, the following three conditions are satisfied:
\begin{itemize}
	\item $\Phi^{Y[k,\tau,\eta]}_t(\check{x})$ exists for every $\check{x}\in\check{K}$,
	\item $\big|\Phi^{Y[k,\tau,\eta]}_t(\check{x})-\check{x}\big|\leq{c_\Phi}\,|t|$ for every $\check{x}\in\check{K}$,
	\item $\Phi^{Y[k,\tau,\eta]}_t(\tilde{x})\in\check{K}$ for every $\tilde{x}\in\tilde{K}$.
\end{itemize}
Now, for every $k\in\mathbb{N}$, every $\tau\in\mathbb{R}$, every $\eta\in[-\rho,\rho]$, every $t\in[-1/\omega_1,1/\omega_1]$, every vector field $X$ on $\mathbb{R}^n$, every real-valued function $f$ on $\mathbb{R}^n$, and every $\check{x}\in\check{K}$, we may define%
\begin{align}
\big(\Phi^{Y[k,\tau,\eta]}_t\big)^\ast{X}(\check{x}) & \ := \ \label{eq:40} \allowdisplaybreaks \\
& \!\!\!\!\!\!\!\!\!\!\!\!\!\!\!\!\!\!\!\!\!\!\!\!\!\!\! \mathrm{D}\big(\Phi^{Y[k,\tau,\eta]}_t\big)^{-1}\big(\Phi^{Y[k,\tau,\eta]}_t(\check{x})\big)X\big(\Phi^{Y[k,\tau,\eta]}_t(\check{x})\big), \nonumber \allowdisplaybreaks \\
\big(\Phi^{Y[k,\tau,\eta]}_t\big)^\ast{f}(\check{x}) & \ := \ f\big(\Phi^{Y[k,\tau,\eta]}_t(\check{x})\big), \label{eq:41}
\end{align}%
where $\mathrm{D}\big(\Phi^{Y[k,\tau,\eta]}_t\big)^{-1}\big(\Phi^{Y[k,\tau,\eta]}_t(\check{x})\big)\in\mathbb{R}^{n\times{n}}$ denotes the Jacobian of $\big(\Phi^{Y[k,\tau,\eta]}_t\big)^{-1}$ at $\Phi^{Y[k,\tau,\eta]}_t(\check{x})$. The so-defined vector field $\big(\Phi^{Y[k,\tau,\eta]}_t\big)^\ast{X}$ (resp. function $\big(\Phi^{Y[k,\tau,\eta]}_t\big)^\ast{f}$) is known as the \emph{pull-back} of $X$ (resp. $f$) by $\Phi^{Y[k,\tau,\eta]}_t$ (see, e.g., \cite{BulloBook}). For all smooth vector fields $X_1$ and $X_2$ on an open subset of $\mathbb{R}^n$, let $[X_1,X_2]$ denote the Lie bracket of $X_1$ and $X_2$ (see, e.g., \cite{BulloBook}). Define%
\begin{equation*}
R\colon\mathbb{N}\times[0,1/\omega_1]\times\mathbb{R}\times\check{K}\times[-\rho,\rho]\times[-\delta,\delta]\to\mathbb{R}
\end{equation*}%
by%
\begin{align*}
& R(k,t,\tau,\check{x},\eta,d) \ := \ \allowdisplaybreaks \\
& \int_0^1\big(\Phi^{Y[k,\tau,\eta]}_{\sigma{t}}\big)^\ast\big[Y[k,\tau,\eta],\tilde{X}[k,\tau,\eta]\big](\check{x})\,\mathrm{d}\sigma \allowdisplaybreaks \\
& - \int_0^1\int_0^\sigma\big(\Phi^{Y[k,\tau,\eta]}_{\nu{t}}\big)^\ast\big[Y[k,\tau,\eta],\tilde{X}[k,\tau,\eta]\big](\check{x})\,\mathrm{d}\nu\,\mathrm{d}\sigma \allowdisplaybreaks \\
& + \int_0^1\int_0^\sigma\big(\Phi^{Y[k,\tau,\eta]}_{\nu{t}}\big)^\ast\big[Y[k,\tau,\eta],\breve{X}[k,\eta]\big](\check{x})\,\mathrm{d}\nu\,\mathrm{d}\sigma \allowdisplaybreaks \\
& + d\,b\int_0^1\big(\Phi^{Y[k,\tau,\eta]}_{\sigma{t}}\big)^\ast\big[Y[k,\tau,\eta],Z[k,\tau]\big](\check{x})\,\mathrm{d}\sigma \allowdisplaybreaks \\
& + b\int_0^1\big(\Phi^{Y[k,\tau,\eta]}_{\sigma{t}}\big)^\ast{E}[k,\tau,\eta,d](\check{x})\,\mathrm{d}\sigma \allowdisplaybreaks \\
& \qquad\qquad\qquad \cdot\int_0^{\tau}\Big(v_k(\nu) - \frac{1}{2\pi}\int_0^{2\pi}v_k(\kappa)\,\mathrm{d}\kappa\Big)\mathrm{d}\nu.
\end{align*}%
Using that $\|U_k\|,\|v_k\|\leq1$, that $R$ is continuous, and that $R$ is $2\pi$-periodic in its third argument, one can show that $R$ is globally bounded by some sufficiently large constant $r>0$. Similarly, it is also easy to see that there exists some sufficiently large Lipschitz constant $L_{\breve{X}}>0$ such that%
\begin{equation}\label{eq:42}
\big|\breve{X}[k,\eta](\tilde{x}) - \breve{X}[k,\eta](\check{x})\big| \ \leq \ L_{\breve{X}}\,|\tilde{x}-\check{x}|
\end{equation}%
for every $k\in\mathbb{N}$, every $\eta\in[-\rho,\rho]$, and all $\tilde{x},\check{x}\in\check{K}$. Finally, set $c_1:=2\,c_\Phi>0$ and $c_2:=L_{\breve{X}}\,c_\Phi + r>0$.

It is left to verify that the above choice of $\omega_1,c_1,c_2$ does the job. To this end, let $\omega\geq\omega_1$, let $k\in\mathbb{N}$, let $d\in{L_\infty}$ with $\|d\|\leq\delta$, let $(\tilde{x},\eta)\colon{I}\to\mathbb{R}^n\times\mathbb{R}$ be a solution of \cref{eq:37}, and let $t_0,t_2\in{I}$ with $t_0<t_2$. Assume that $\tilde{x}(t)\in\tilde{K}$ and $\eta(t)\in[-\rho,\rho]$ for every $t\in[t_0,t_2]$. Then%
\begin{equation}\label{eq:43}
\check{x}(t) \ := \ \Phi^{Y[k,\omega{t},\eta(t)]}_{-1/\omega}(\tilde{x}(t))
\end{equation}%
defines a continuously differentiable curve $\check{x}$ on $[t_0,t_2]$ with values in $\check{K}$. Using, for example, Proposition~3.85 and Exercise~E9.4 in \cite{BulloBook}, one can verify that%
\begin{subequations}\label{eq:44}%
\begin{align}
\dot{\check{x}}(t) & \ = \ \breve{X}[k,\eta(t)](\check{x}(t)) + d(t)\,b\,Z[k,\omega{t}](\check{x}(t)) \allowdisplaybreaks \\
& \qquad + R\big(k,1/\omega,\omega{t},\check{x}(t),\eta(t),d(t)\big)
\end{align}%
\end{subequations}%
for every $t\in[t_0,t_2]$. By writing \cref{eq:44} in integral form, we obtain%
\begin{align*}
& \tilde{x}(t_1) - \tilde{x}(t_0) - \int_{t_0}^{t_1} \Big(\breve{X}[k,\eta(t)](\tilde{x}(t)) \allowdisplaybreaks \\
& \qquad\qquad\qquad\qquad\qquad + d(t)\,b\,Z[k,\omega{t}](\tilde{x}(t))\Big)\,\mathrm{d}t \allowdisplaybreaks \\
& \ = \ \big(\tilde{x}(t_1) - \check{x}(t_1)\big) - \big(\tilde{x}(t_0) - \check{x}(t_0)\big) \allowdisplaybreaks \\
& \qquad + \int_{t_0}^{t_1}\big(\breve{X}[k,\eta(t)](\tilde{x}(t)) - \breve{X}[k,\eta(t)](\check{x}(t))\big)\,\mathrm{d}t \allowdisplaybreaks \\
& \qquad + \int_{t_0}^{t_1}R\big(k,1/\omega,\omega{t},\check{x}(t),\eta(t),d(t)\big)\,\mathrm{d}t
\end{align*}%
for every $t_1\in[t_0,t_2]$ from which \cref{eq:38} easily follows.
\end{proof}

\subsection{Approximation for large \texorpdfstring{$\boldsymbol{k}$}{k}}\label{sec:A.2}
The second lemma implies that, for every sufficiently large $k\in\mathbb{N}$, the solutions of \cref{eq:24} approximate the solutions of \cref{eq:29}.%
\begin{lemma}\label{lemma:2}
There exists a function $\alpha$ of class $\mathcal{K}_\infty$ such that%
\begin{align*}
\Big|\breve{F}_k(\tilde{x}) - \frac{b}{2\pi^{n-1}}\int_{\partial\mathbb{B}}J(\tilde{x}+a\,s)\,s\,\mathrm{d}\lambda_{\partial\mathbb{B}}(s)\Big| & \ \leq \ \frac{1}{2\,\alpha(k)}, \\
\Big|\breve{E}_k(\eta)\Big| & \ \leq \ \frac{1}{2\,\alpha(k)}
\end{align*}%
for every $k\in\mathbb{N}$, every $\tilde{x}\in\tilde{K}$, and every $\eta\in[-\rho,\rho]$.
\end{lemma}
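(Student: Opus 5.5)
The plan is to show that the one-dimensional average along the curve $\theta_k$ converges to the $(n-1)$-dimensional average over the torus $[0,2\pi)^{n-1}$. The convergence should be uniform over the compact set of integrands indexed by $\tilde{x}\in\tilde{K}$ and $\eta\in[-\rho,\rho]$. Given a uniform rate $\epsilon_k\to0$, we then choose $\alpha\in\mathcal{K}_\infty$ with $1/(2\alpha(k))\ge\epsilon_k$ for all $k$. This is possible by taking any $\mathcal{K}_\infty$ function lying below $1/(2\sup_{j\ge k}\epsilon_j)$ at the integers.

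First, I would rewrite the target integrals. Put $G_{\tilde{x}}(\vartheta):=J(\tilde{x}+a\phi(\vartheta))\,g(\vartheta)\,\phi(\vartheta)$. This is a continuous function on $\mathbb{R}^{n-1}$ that is $2\pi$-periodic in each variable, since $\phi$ and $|\sin|$ are periodic. By \cref{eq:10}, $\int_{\partial\mathbb{B}}J(\tilde{x}+as)s\,\mathrm{d}\lambda=\int_W G_{\tilde{x}}$.

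Next, I would use the symmetry $\vartheta_j\mapsto 2\pi-\vartheta_j$ for $j\ge1$. Under this map, $\sin\vartheta_j$ changes sign and $\cos\vartheta_j$ is unchanged. Composing with a suitable shift of $\vartheta_0$ (by $\pi$ when needed) shows that $\phi$ on $(0,2\pi)\times(\pi,2\pi)\times\cdots$ covers the sphere again, and $g$ is invariant under this change. Iterating over the $n-2$ coordinates gives $\int_{[0,2\pi]^{n-1}}G_{\tilde{x}}=2^{n-2}\int_W G_{\tilde{x}}$. Hence
\[
\frac{b}{(2\pi)^{n-1}}\int_{[0,2\pi]^{n-1}}G_{\tilde{x}}=\frac{b}{2\pi^{n-1}}\int_{\partial\mathbb{B}}J(\tilde{x}+as)s\,\mathrm{d}\lambda ,
\]
which matches the constant in the lemma. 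The same computation with $J\equiv1$ gives torus average $0$ for $g\phi$, because $\int_{\partial\mathbb{B}}s\,\mathrm{d}\lambda=0$.

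The main step is equidistribution. Expand a trigonometric polynomial $P(\vartheta)=\sum_{m}c_m e^{i m\cdot\vartheta}$ with $m\in\mathbb{Z}^{n-1}$. Then
\[
\frac{1}{2\pi}\int_0^{2\pi}P(\theta_k(\tau))\,\mathrm{d}\tau=\sum_m c_m\,\mathbf{1}\{m_0+\textstyle\sum_{j\ge1}2^{jk-1}m_j=0\}.
\]
If all $|m_j|<2^{k-2}$, the base-$2^k$ structure forces $m=0$. To see this, look at the highest nonzero index $j$: the term $2^{jk-1}|m_j|\ge2^{jk-1}$ exceeds the sum of all lower terms, which is less than $2^{(j-1)k-1}\cdot2^{k-2}\cdot2\le2^{jk-2}$. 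So the curve average of $P$ equals the torus average whenever $k$ exceeds the degree of $P$.

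For a general periodic continuous $G_{\tilde{x}}$, I would approximate it by Fejér means $\sigma_N G_{\tilde{x}}$. These are trigonometric polynomials of degree at most $N$, and the Fejér kernel is a positive kernel of mass one, so both the curve average and the torus average differ from those of $\sigma_N G_{\tilde{x}}$ by at most $\|G_{\tilde{x}}-\sigma_N G_{\tilde{x}}\|_\infty$. Uniformity in $\tilde{x}\in\tilde{K}$ follows because $\{G_{\tilde{x}}\}$ is uniformly bounded and equicontinuous, as $J$ is uniformly continuous on a compact neighbourhood of $\tilde{K}+a\mathbb{B}$. Therefore $\sup_{\tilde{x}}\|G_{\tilde{x}}-\sigma_N G_{\tilde{x}}\|_\infty\to0$ as $N\to\infty$.

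Choosing $N=N(k)\to\infty$ with $2^{k-2}>N(k)$ gives a uniform rate $\epsilon_k\to0$. The same argument applies to $\breve{E}_k$, with error bounded by $\rho$ times the rate for $g\phi$.

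I expect the main obstacle to be the bookkeeping of the doubled-domain and measure constants, that is, verifying that the torus average really equals $\frac{1}{2\pi^{n-1}}\int_{\partial\mathbb{B}}$. The Fourier argument itself is routine.
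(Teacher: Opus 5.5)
Your argument is correct, but its core equidistribution step takes a different route from the paper's. The paper reparametrizes the curve by the sawtooth map $\gamma_k(\sigma)=[\mathrm{saw}(\sigma),\mathrm{saw}(2^k\sigma),\ldots]$ on $[0,1]$ and uses the dyadic digit structure directly. Each of the $2^{dk}$ subintervals of length $2^{-dk}$ is mapped by $\gamma_k$ into its own subcube of side $2^{-k}$, which gives the explicit error bound $\omega(f;\sqrt{d}\,2^{-k})$ of Claim~1. The price is the rescaling $2\Theta$ and the doubled domain $2W$ in \cref{eq:56}--\cref{eq:57}.

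You instead stay on the torus with the actual frequencies $(1,2^{k-1},\ldots,2^{(n-2)k-1})$. You show that they admit no nontrivial integer relation with coefficients below $2^{k-2}$, so low-degree trigonometric polynomials are averaged exactly. You then pass to the continuous integrands by product Fej\'er approximation, uniformly over the equicontinuous family $\{G_{\tilde x}\}_{\tilde x\in\tilde K}$.

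The paper's route is more elementary and gives a rate read off directly from a modulus of continuity. Yours avoids the sawtooth and $4\pi$-period bookkeeping and isolates the actual mechanism, namely the absence of low-order resonances among the frequencies. It would therefore apply verbatim to other integer frequency vectors with that property, at the cost of a less explicit rate. You also make explicit the construction of $\alpha\in\mathcal{K}_\infty$ from the uniform rate, which the paper leaves implicit.

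One justification should be tightened; the paper also only asserts the corresponding fact in \cref{eq:57}. A shift of $\vartheta_0$ by $\pi$ does not undo the reflection $R_j\colon\vartheta_j\mapsto 2\pi-\vartheta_j$ when $j\ge 2$. That reflection changes the sign of the first $j+1$ components of $\phi$, not only the first two. The shift is not needed, though. We have $\phi\circ R_j=D_j\,\phi$ with $D_j$ a diagonal matrix with entries $\pm1$, and $g\circ R_j=g$. Apply \cref{eq:10} to $s\mapsto J(\tilde x+a\,D_js)\,D_js$ and use the invariance of $\lambda_{\partial\mathbb{B}}$ under the orthogonal map $D_j$. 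This shows that each of the $2^{n-2}$ boxes of $[0,2\pi]^{n-1}$ contributes exactly $\int_W G_{\tilde x}$. Products of the $R_j$ handle the boxes with several coordinates in $(\pi,2\pi)$, so your constant $\frac{1}{2\pi^{n-1}}$ is confirmed.
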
%
\begin{proof}
As an abbreviation, set $d:=n-1$. Let $\mathrm{saw}\colon\mathbb{R}\to[0,1]$ be the sawtooth wave of amplitude and period~1; that is, $\mathrm{saw}(\sigma):= \sigma\bmod{1}$ for every $\sigma\in\mathbb{R}$. For every $k\in\mathbb{N}$, define $\gamma_k\colon[0,1]\to[0,1]^d$ by%
\begin{equation*}
\gamma_k(\sigma) := \big[\mathrm{saw}(\sigma),\ \mathrm{saw}(2^k\sigma),\ \ldots,\  \mathrm{saw}\big(2^{(d-1){k}}\sigma\big) \big]^\top\!\!.
\end{equation*}%
For every continuous $f\colon[0,1]^d\to\mathbb{R}$ and $\varepsilon\in\mathbb{R}_+$, let%
\begin{equation*}
\omega(f,\varepsilon) := \sup\big\{|f(z)-f(\zeta)| \colon z,\zeta\in[0,1]^d, |z-\zeta|\leq\varepsilon\big\}
\end{equation*}%
be the modulus of continuity of $f$ of order $\varepsilon$.%
\begin{claim}\label{claim1}%
Let $f\colon[0,1]^d\to\mathbb{R}$ continuous, let $k\in\mathbb{N}$. Then%
\begin{equation*}
\Big|\int_0^1f(\gamma_k(\sigma))\,\mathrm{d}\sigma - \int_{[0,1]^d}f(z)\,\mathrm{d}z\Big| \ \leq \ \omega\big(f;\tfrac{\sqrt{d}}{2^k}\big).
\end{equation*}%
\end{claim}%
\emph{Proof of \Cref{claim1}.}
We follow the proof of Theorem~3.1 in \cite{Garcia2021}. Let $\mathcal{I}$ be the collection of the canonical partition of $[0,1]$ into $2^{d\cdot{k}}$ subintervals of length $2^{-d\cdot{k}}$. We enumerate the elements of $\mathcal{I}$ by the map $i\colon\{0,1,\ldots,2^k-1\}^d\to\mathcal{I}$ defined by%
\begin{equation}\label{eq:45}
i(a) \ := \ a_1\,2^{-1\cdot{k}} + \cdots + a_d\,2^{-d\cdot{k}} + [0,2^{-d\cdot{k}}].
\end{equation}%
Let $\mathcal{C}$ be the collection of the canonical partition of $[0,1]^d$ into $2^{d\cdot{k}}$ subcubes of side length $2^{-k}$. We enumerate the elements of $\mathcal{C}$ by the map $c\colon\{0,1,\ldots,2^k-1\}^d\to\mathcal{C}$ defined by%
\begin{equation}\label{eq:46}
i(a) \ := \ 2^{-k}\,a + [0,2^{-k}]^d.
\end{equation}%
Since both $i$ and $c$ are bijective, also $\varphi:=c\circ{i^{-1}}\colon\mathcal{I}\to\mathcal{C}$ is bijective. Moreover, using the definition of $\gamma_k$, one can easily check that%
\begin{equation}\label{eq:47}
\gamma_k(\mathrm{int}(I)) \ \subset \ \varphi(I)
\end{equation}%
for every $I\in\mathcal{I}$, where $\mathrm{int}(I)$ denotes the interior of $I$. It follows that%
\begin{align}
& \Big|\int_{[0,1]^d}f(z)\,\mathrm{d}z - \int_0^1f(\gamma_k(\sigma))\,\mathrm{d}\sigma\Big| \label{eq:48} \allowdisplaybreaks \\
& \ \leq \ \sum_{I\in\mathcal{I}}\Big|\int_{\varphi(I)}f(z)\,\mathrm{d}z - \int_{I}f(\gamma_k(\sigma))\,\mathrm{d}\sigma\Big| \nonumber
\end{align}%
For the rest of the proof of \Cref{claim1}, fix an arbitrary $I\in\mathcal{I}$ and set $C:=\varphi(I)$. Since $\mathcal{C}$ consists of $2^{d\cdot{k}}$, the proof of \Cref{claim1} is complete if we can show that%
\begin{equation}\label{eq:49}
\!\!\Big|\int_Cf(z)\,\mathrm{d}z - \int_If(\gamma_k(\sigma))\,\mathrm{d}\sigma\Big| \leq \frac{1}{2^{d\cdot{k}}}\,\omega\big(f;\tfrac{\sqrt{d}}{2^k}\big).\!\!
\end{equation}%
For the moment, fix an arbitrary integer $m>k$. Let $\mathcal{I}_m$ be the collection of the canonical partition of $I$ into $2^{d\cdot(m-k)}$ subintervals of length $2^{-d\cdot{m}}$. Let $\mathcal{C}_m$ be the collection of the canonical partition of $C$ into $2^{d\cdot(m-k)}$ subcubes of side length $2^{-m}$. For every $J\in\mathcal{I}_m$, let $s_{m,J}$ be the center point of $J$. For every $D\in\mathcal{C}_m$, let $\zeta_{m,D}$ be the center point of $D$. Since $\gamma_k$ is continuous on $I$ and since $f$ is continuous on $C$, by the construction of the Riemann integral, we have%
\begin{align}
\int_If(\gamma_k(\sigma))\,\mathrm{d}\sigma & \ = \ \lim_{m\to\infty}\frac{1}{2^{d\cdot{k}}}\sum_{J\in\mathcal{I}_m}f(\gamma_k(s_{m,J})), \label{eq:50} \allowdisplaybreaks \\
\int_Cf(z)\,\mathrm{d}z & \ = \ \lim_{m\to\infty}\frac{1}{2^{d\cdot{k}}}\sum_{D\in\mathcal{C}_m}f(\zeta_{m,D}). \label{eq:51}
\end{align}%
For the rest of the proof of \Cref{claim1}, fix arbitrary integer $m>k$, $J\in\mathcal{I}_m$, and $D\in\mathcal{C}_m$. Since $s_{m,J}$ is in the interior of $I$, it follows from \cref{eq:47} that $\gamma_k(s_{m,J})\in\varphi(I)=C$. Since $C$ is a cube of side length $2^{-k}$ in dimension $d$, its diagonal has length $\sqrt{d}\,2^{-k}$. By the definition of the modulus of continuity, this implies%
\begin{equation}\label{eq:52}
\big|f(\gamma_k(s_{m,J})) - f(\zeta_{m,D})\big| \ \leq \ \omega\big(f;\tfrac{\sqrt{d}}{2^k}\big).
\end{equation}%
Now \cref{eq:49} follows from \cref{eq:50}--\cref{eq:52} and the fact that $\mathcal{I}_m$ and $\mathcal{C}_m$ consist of the same number of elements.\hfill$\Box$

Define $\Theta\colon[0,1]^d\to[0,2\pi]\times[0,\pi]^{d-1}$ by%
\begin{equation}\label{eq:53}
\Theta(z_0,z_1,\ldots,z_{d-1}) \ := \ \pi\,[2{z_0}, z_1, \ldots, z_{d-1}]^\top.
\end{equation}%
\begin{claim}\label{claim2}
For all $k\in\mathbb{N}$, $\tilde{x}\in\mathbb{R}^n$, and $\eta\in\mathbb{R}$, we have%
\begin{align*}
\breve{F}_k(\tilde{x}) & \ = \ b\int_0^1J\big(\tilde{x} + a\,\phi\big(2\Theta(\gamma_k(\sigma))\big)\big) \allowdisplaybreaks \\
& \qquad\qquad\quad \cdot g\big(2\Theta(\gamma_k(\sigma))\big)\,\phi\big(2\Theta(\gamma_k(\sigma))\big)\,\mathrm{d}\sigma, \allowdisplaybreaks \\
\breve{E}_k(\eta) & \ = \ -b\,\eta\int_0^1g\big(2\Theta(\gamma_k(\sigma))\big)\,\phi\big(2\Theta(\gamma_k(\sigma))\big)\,\mathrm{d}\sigma.
\end{align*}%
\end{claim}%
\Cref{claim2} follows from the definitions in \cref{eq:11}, \cref{eq:22}, and \cref{eq:31}, the $2\pi$-periodicity of $\phi$, $g$ in each of their arguments, and integration by substitution.\hfill$\Box$
\begin{claim}\label{claim3}
There exists a function $\alpha$ of class $\mathcal{K}_\infty$ such that%
\begin{align*}
\Big|\breve{F}_k(\tilde{x}) - b\int_{[0,1]^d} & J\big(\tilde{x} + a\,\phi(2\Theta(z))\big) \qquad \allowdisplaybreaks \\
\cdot &  g(2\Theta(z))\,\phi(2\Theta(z))\,\mathrm{d}z\Big| \ \leq \ \frac{1}{2\,\alpha(k)}, \allowdisplaybreaks \\
\Big|\breve{E}_k(\eta) + b\,\eta\int_{[0,1]^d} & g(2\Theta(z))\,\phi(2\Theta(z))\,\mathrm{d}z\Big| \ \leq \ \frac{1}{2\,\alpha(k)}
\end{align*}%
for every $k\in\mathbb{N}$, every $\tilde{x}\in\tilde{K}$, and every $\eta\in[-\rho,\rho]$.%
\end{claim}%
\emph{Proof of \Cref{claim3}.}
\Cref{claim3} follows from \Cref{claim1,claim2} and the known fact that a continuous map on a compact set is uniformly continuous.\hfill$\Box$

Let $\tilde{x}\in{K}$. Recall that $d=n-1$. Because of \Cref{claim3}, the proof of \Cref{lemma:2} is complete if we can show that%
\begin{align}\label{eq:54}
& \int_{[0,1]^{n-1}} J\big(\tilde{x} + a\,\phi(2\Theta(z))\big)\,g(2\Theta(z))\,\phi(2\Theta(z))\,\mathrm{d}z \nonumber \allowdisplaybreaks \\
& \qquad \ = \ \frac{1}{2\pi^{n-1}}\int_{\partial\mathbb{B}}J(\tilde{x}+a\,s)\,s\,\mathrm{d}\lambda_{\partial\mathbb{B}}(s), \allowdisplaybreaks \\
& \int_{[0,1]^{n-1}} g(2\Theta(z))\,\phi(2\Theta(z))\,\mathrm{d}z \label{eq:55}\allowdisplaybreaks \\
& \qquad \ = \ \frac{1}{2\pi^{n-1}}\int_{\partial\mathbb{B}}s\,\mathrm{d}\lambda_{\partial\mathbb{B}}(s) \ = \ 0. \nonumber
\end{align}%
We only show how to verify \cref{eq:54}, since the procedure for \Cref{eq:55} is very similar. In the first step, integration by substitution yields%
\begin{align}\label{eq:56}
& \int_{[0,1]^{n-1}} J\big(\tilde{x} + a\,\phi(2\Theta(z))\big)\,g(2\Theta(z))\,\phi(2\Theta(z))\,\mathrm{d}z \nonumber \allowdisplaybreaks \\
& \ = \ \frac{1}{4\pi}\,\frac{1}{(2\pi)^{n-2}}\int_{2W}J\big(\tilde{x} + a\,\phi(\vartheta)\big)\,g(\vartheta)\,\phi(\vartheta)\,\mathrm{d}\vartheta,
\end{align}%
where the set $W$ is defined in \cref{eq:07}. In the second step, by partitioning the rectangular cuboid $2W$ into $2^{n-1}$ rectangular cuboids with the same side lengths as $W$, one can show that%
\begin{align}\label{eq:57}
& \int_{2W}J\big(\tilde{x} + a\,\phi(\vartheta)\big)\,g(\vartheta)\,\phi(\vartheta)\,\mathrm{d}\vartheta \nonumber \allowdisplaybreaks \\
& \ = \ 2^{n-1}\int_{W}J\big(\tilde{x} + a\,\phi(\vartheta)\big)\,g(\vartheta)\,\phi(\vartheta)\,\mathrm{d}\vartheta.
\end{align}%
Finally, the asserted equation \cref{eq:54} follows from equations \cref{eq:56}, \cref{eq:57}, and \cref{eq:10}. This completes the proof of \Cref{lemma:2}.\hfill$\Box$

\subsection{Proof of \texorpdfstring{\Cref{proposition:1}}{Proposition~\ref{proposition:1}}}\label{sec:A.3}
Let $\omega_1,c_1,c_2>0$ as in \Cref{lemma:1} and let $\alpha\in\mathcal{K}_\infty$ as in \Cref{lemma:2}. Let $L>0$ be a Lipschitz constant for the smooth gradient vector field $\nabla\bar{J}_a$ on the compact set $\tilde{K}$. Fix an arbitrary large $T>0$ and an arbitrary small $\varepsilon>0$. After possibly shrinking $\varepsilon>0$, we may suppose that the closed $\varepsilon$-neighborhood of $\bar{K}$ is contained in $\tilde{K}$. We choose sufficiently large $\omega_0\geq\omega_1$ and $k_0\in\mathbb{N}$ such that%
\begin{equation}\label{eq:58}
\big(\tfrac{1}{\omega}\,c_1+\big(c_2 + \tfrac{1}{\alpha(k)}\big)T\big)\,\exp(a\,b\,c\,L\,T) \ < \ \varepsilon
\end{equation}%
for every $\omega\geq\omega_0$ and every $k\geq{k_0}$.

We verify that the above choice of $\omega_0$ and $k_0$ does the job. To this end, let $\omega\geq\omega_0$, let $k\geq{k_0}$, let $d\in{L_\infty}$ with $\|d\|\leq\delta$, let $t_0\in\mathbb{R}$, let $\tilde{x}_0\in\bar{K}$, and let $\eta_0\in\mathbb{R}$ with $|\eta_0|\leq\rho$. Let $(\tilde{x},\eta)\colon\tilde{I}\to\mathbb{R}^n\times\mathbb{R}$ be the maximal solution of \cref{eq:18} with initial condition $\tilde{x}(t_0)=\tilde{x}_0$, $\eta(t_0)=\eta_0$. Let $\bar{x}\colon\bar{I}\to\mathbb{R}^n$ be the maximal solution of \cref{eq:29} with initial condition $\bar{x}(t_0)=\tilde{x}_0$. Assume that $[t_0,t_0+T]\subset\bar{I}$ and that $\bar{x}(t)\in\bar{K}$ for every $t\in[t_0,t_0+T]$. The proof is complete if we can show that $[t_0,t_0+T]\subset\tilde{I}$ and that \cref{eq:30} and $|\eta(t)|\leq\rho$ hold for every $t\in[t_0,t_0+T]$. Let $t_2>t_0$ be the largest element in the intersection of $[t_0,t_0+T]$ and $\tilde{I}$ for which $\tilde{x}(t)\in\tilde{K}$ for every $t\in[t_0,t_2]$. Using the defining differential equation \cref{eq:18:c} of $\eta$ and inequality \cref{eq:36}, it follows that $|\eta(t)|\leq\rho$ for every $t\in[t_0,t_2]$. Therefore, we may apply \Cref{lemma:1}, which states that%
\begin{align}\label{eq:59}
& \Big|\tilde{x}(t_1) - \tilde{x}(t_0) - \int_{t_0}^{t_1} \Big(\breve{F}_k(\tilde{x}(t)) + \breve{E}_k(\eta(t)) \allowdisplaybreaks \\
& \qquad\qquad + d(t)\,b\,v_k(\omega{t})\Big)\,\mathrm{d}t\Big| \ \leq \ \tfrac{1}{\omega}\,\big(c_1 + c_2\,|t_1-t_0|\big) \nonumber
\end{align}%
for every $t_1\in[t_0,t_2]$. \Cref{lemma:2} and equations~\cref{eq:05}, \cref{eq:28} imply%
\begin{equation}\label{eq:60}
\big|\breve{F}_k(\tilde{x}(t)) + \breve{E}_k(\eta(t)) -  a\,b\,c\,\nabla\bar{J}_a(\bar{x}(t)) \big| \ \leq \ \tfrac{1}{\alpha(k)}
\end{equation}%
for every $t_1\in[t_0,t_2]$. By writing \cref{eq:29} in integral form, we conclude from \cref{eq:59} and \cref{eq:60} that%
\begin{align}
|\tilde{x}(t_1)-\bar{x}(t_1)| & \ \leq \ \tfrac{1}{\omega}\,\big(c_1 + \big(c_2 + \tfrac{1}{\alpha(k)}\big)|t_1-t_0|\big) \nonumber \allowdisplaybreaks \\
& + a\,b\,c\,L\int_{t_0}^{t_1}|\tilde{x}(t)-\bar{x}(t)|\,\mathrm{d}t \label{eq:61}
\end{align}%
for every $t_1\in[t_0,t_2]$. Using the Gronwall lemma and \cref{eq:58} it follows that that $|\tilde{x}(t)-\bar{x}(t)|<\varepsilon$ for every $t\in[t_0,t_2]$. Since $\bar{x}(t)\in\bar{K}$, it follows that $\tilde{x}(t)$ is in the interior of $\tilde{K}$ for every $t\in[t_0,t_2]$. This implies $t_2=t_0+T$ and completes the proof of \Cref{proposition:1}.
\end{proof}%
\end{document}